\DeclareFontFamily{U}{shuffle}{}
\DeclareFontShape{U}{shuffle}{m}{n}{ <-8>shuffle7 <8->shuffle10}{}
\newcommand{\tn}{{\tilde{n}}}
\DeclareMathOperator{\sn}{{\rm sn}}
\DeclareMathOperator{\cn}{{\rm cn}}
\DeclareMathOperator{\dn}{{\rm dn}}
\DeclareMathOperator\Res{{\rm Res}}
\def\int{\displaystyle\!int}
\def\lim{\displaystyle\!lim}
\def\sum{\displaystyle\!sum}
\def\sup{\displaystyle\!sup}
\def\inf{\displaystyle\!inf}
\def\cap{\displaystyle\!cap}
\def\max{\displaystyle\!max}
\def\min{\displaystyle\!min}
\def\frac{\displaystyle\!frac}
\let\oldsection\section
\renewcommand\section{\setcounter{equation}{0}\oldsection}
\def\R{\mathbb{R}}
\def\N{\mathbb{N}}
\def\Z{\mathbb{Z}}
\def\Q{\mathbb{Q}}
\theoremstyle{plain}
\newtheorem{thm}{Theorem}[section]
\newtheorem{lem}[thm]{Lemma}
\newtheorem{conj}[thm]{Conjecture}
\newtheorem{cor}[thm]{Corollary}
\newtheorem{pro}[thm]{Proposition}
\theoremstyle{definition}
\newtheorem{re}[thm]{Remark}
\newtheorem{exa}[thm]{Example}
\begin{document}
\title{\bf Berndt-Type Integrals of Order Three and Series Associated with Jacobi Elliptic Functions}
\author{
{Hongyuan Rui${}^{a,}$\thanks{Email: rhy626514@163.com},\quad Ce Xu${}^{a,}$\thanks{Email: cexu2020@ahnu.edu.cn}\quad and\quad Jianqiang Zhao${}^{b,}$\thanks{Email: zhaoj@ihes.fr}}\\[1mm]
\small a. School of Mathematics and Statistics, Anhui Normal University,\\ \small  Wuhu 241002, P.R. China\\
\small b. Department of Mathematics, The Bishop's School, La Jolla, CA 92037, USA
}

\date{}
\maketitle

\noindent{\bf Abstract.} In this paper, we first establish explicit evaluations of six classes of hyperbolic sums by special values of the Gamma function by using the tools of the Fourier series expansions and the Maclaurin series expansions of a few Jacobi elliptic functions developed in our previous paper. Then, using the method of contour integrations involving hyperbolic and trigonometric functions, we establish explicit evaluations of two families of Berndt-type integrals of order three by special values of the Gamma function. Furthermore, we present some interesting consequences and illustrative examples.

\medskip
\noindent{\bf Keywords}: Berndt-type integral, $q$-series, hyperbolic and trigonometric functions, contour integration, Jacobi elliptic functions, Fourier series expansions.

\medskip
\noindent{\bf AMS Subject Classifications (2020):} 05A30, 32A27, 42A16, 33E05, 11B68.

\section{Introduction}

For nonnegative integer $a$ and positive integer $b$, the \emph{Berndt-type integrals of order $b$} are of the form
\begin{align}\label{BTI-definition-1}
\int_0^\infty \frac{x^{a}dx}{(\cos x\pm\cosh x)^b},
\end{align}
where $a\geq 0$ and $b\geq 1$ if the denominator has ``$+$' sign, and $a\geq 2b$ otherwise. This kind of integrals can be traced back to Ramanujan \cite[pp. 325-326]{Rama1916} who first submitted the integral
\begin{align}\label{inte-Ramanujan}
\int_0^\infty \frac{\sin(nx)}{x(\cos x+\cosh x)}dx=\frac{\pi}{4} \quad \text{(for any odd integer $n$)}
\end{align}
as a problem to the \emph{Indian Journal of Pure and Applied Mathematics}. Wilkinson \cite{W1916} provided a proof four years later. At the International Conference on Orthogonal Polynomials and $q$-Series, which was held in Orlando in May 2015 in celebration of the 70th birthday of Professor Mourad Ismail, Dennis Stanton gave a plenary talk titled ``A small slice of Mourad's work". One of the topics in that talk was about ``the mystery integral of Mourad Ismail" \cite{K2017}:
\begin{align}\label{inte-Mourad-Ismail}
\int_{-\infty}^\infty \frac{dx}{\cos(K \sqrt{x})+\cosh(K' \sqrt{x})}=2,
\end{align}
where $K(x)$ denotes the \emph{complete elliptic integral of the first kind} defined by
\begin{align}
K:=K(x):=K(k^2):=\int\limits_{0}^{\pi/2}\frac {d\varphi}{\sqrt{1-k^2\sin^2\varphi}}=\frac {\pi}{2} {_2}F_{1}\left(\frac {1}{2},\frac {1}{2};1;k^2\right).
\end{align}
Here $x=k^2$ and $k\ (0<k<1)$ is the modulus of $K$. The complementary modulus $k'$ is defined by $k'=\sqrt {1-k^2}$.
Furthermore, we set as usual
\begin{align}
K':=K(k'^2)=\int\limits_{0}^{\pi/2}\frac {d\varphi}{\sqrt{1-k'^2\sin^2\varphi}}=\frac {\pi}{2} {_2}F_{1}\left(\frac {1}{2},\frac {1}{2};1;1-k^2\right).
\end{align}
Similarly, the \emph{complete elliptic integral of the second kind} is denoted by (Whittaker and Watson \cite{WW1966})
\begin{align}
E:=E(x):=E(k^2):=\int\limits_{0}^{\pi/2} \sqrt{1-k^2\sin^2\varphi}d\varphi=\frac {\pi}{2} {_2}F_{1}\left(-\frac {1}{2},\frac {1}{2};1;k^2\right).\label{1.5}
\end{align}
This curious integral \eqref{inte-Mourad-Ismail} first appeared in \cite{Ismail1998} by Ismail and Valent. Berndt \cite{Berndt2016} provided direct evaluations of \eqref{inte-Ramanujan}, \eqref{inte-Mourad-Ismail} and other similar integrals of this type by using residue computations, the Fourier series expansions and the Maclaurin series expansions of Jacobi elliptic functions. In particular, he proved some explicit relations between hyperbolic series and integrals containing trigonometric and hyperbolic functions. For example, from \cite[Thm. 3.1 and Eq. (5.5)]{Berndt2016} for nonnegative integer $a$, Berndt showed that
\begin{align}\label{equ.one}
(1+(-1)^a)\int_0^\infty \frac{x^{2a+1}dx}{\cos x+\cosh x}=\frac{\pi^{2a+2}i^a}{2^a} \sum_{n=0}^\infty \frac{(-1)^n (2n+1)^{2a+1}}{\cosh((2n+1)\pi/2)},
\end{align}
and
\begin{align}\label{equ.-.one}
(1+i^{a+1})\int_0^\infty \frac{x^{a}dx}{\cos x-\cosh x}=2i(1+i)^{a-1}\pi^{a+1}\sum_{n=1}^\infty \frac{(-1)^{n+1}n^a}{\sinh(n\pi)}\quad (a\geq 2).
\end{align}
Then, applying the Fourier series expansions and the Maclaurin series expansions of a few Jacobi elliptic functions, Berndt proved that the hyperbolic series on the right-hand side of \eqref{equ.one} and \eqref{equ.-.one} can be expressed in terms of special values of the Gamma function in certain special cases. Some recent results on infinite series involving hyperbolic functions may also be found in the works of \cite{BB2002,Campbell,T2015,T2008,T2010,T2012,X2018,XuZhao-2022,Ya-2018}.

It turns out that Berndt's results and methods can be generalized and organized further. In our previous paper \cite{XZ2023}, by extending an argument as used in the proof of the main theorem of \cite{Berndt2016}, we proved that the following explicit evaluations of two Berndt-type integrals of order two (cf. \cite[Thm. 1.3]{XZ2023}):
\begin{align*}
&\int_0^\infty \frac{x^{4p+1}}{(\cos x-\cosh x)^2}dx\in \Q\frac{\Gamma^{8p}(1/4)}{\pi^{2p}}+\Q\frac{\Gamma^{8p+8}(1/4)}{\pi^{2p+4}},\\
&\int_0^\infty \frac{x^{4p+1}}{(\cos x+\cosh x)^2}dx\in \Q\frac{\Gamma^{8p}(1/4)}{\pi^{2p}}+\Q\frac{\Gamma^{8p+8}(1/4)}{\pi^{2p+4}},
\end{align*}
where $p\in \N$. Similarly, we also proved the following explicit evaluations of two Berndt-type integrals of order one
\begin{align*}
&\int_0^\infty \frac{x^{4p-1}}{\cos x-\cosh x}dx\in \Q \frac{\Gamma^{8p}(1/4)}{\pi^{2p}},\\
&\int_0^\infty \frac{x^{4p+1}}{\cos x+\cosh x}dx\in \Q \frac{\Gamma^{8p+4}(1/4)}{\pi^{2p+1}}.
\end{align*}

In this paper, we use the contour integrals and the Fourier series expansion and Maclaurin series expansion of Jacobi elliptic function to establish explicit evaluations of Berndt-type integrals of order three.
\begin{thm}\label{Main-one-theorem} Set $\Gamma:=\Gamma(1/4)$. For any positive integer $p$,
\begin{align*}
	\int_{0}^{\infty}\frac{z^{4p+1}\, dz}{(\cos z+\cosh z)^3}\in &\Q\frac{\Gamma^{8p-4}}{\pi^{2p-1}}+\Q\frac{\Gamma^{8p+4}}{\pi^{2p+3}}+\Q\frac{\Gamma^{8p+4}}{\pi^{2p+2}}+\Q\frac{\Gamma^{8p+4}}{\pi^{2p+1}}+\Q\frac{\Gamma^{8p+12}}{\pi^{2p+7}},
	\\\int_{0}^{\infty}\frac{z^{4p-1}\, dz}{\left(\cos z-\cosh z\right)^3}\in&\Q\frac{\Gamma^{8p-8}}{\pi^{2p-2}}+\Q\frac{\Gamma^{8p}}{\pi^{2p+2}}+\Q\frac{\Gamma^{8p}}{\pi^{2p+1}}+\Q\frac{\Gamma^{8p}}{\pi^{2p}}+\Q\frac{\Gamma^{8p+8}}{\pi^{2p+6}}\quad (p>1).
\end{align*}
\end{thm}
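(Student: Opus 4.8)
The plan is to follow the strategy that worked for orders one and two in \cite{XZ2023}, now pushed to order three, where the new feature is that a cube in the denominator forces the appearance of second derivatives at the poles and hence the combination of three different ``weights'' of hyperbolic series rather than one or two. First I would set up the contour integral: for the ``$+$'' case consider $f(z)=\pi z^{4p+1}/\bigl((\cos z+\cosh z)^3\sin(\pi z)\bigr)$ (or a similar kernel chosen so that summing residues at the integers produces a rapidly convergent hyperbolic series), integrate around a large square $|{\rm Re}\,z|,|{\rm Im}\,z|\le N+\tfrac12$, and let $N\to\infty$. The function $\cos z+\cosh z$ vanishes at the points where $e^{iz}+e^{-iz}+e^{z}+e^{-z}=0$; as in \eqref{equ.one}, these lie along the rays ${\rm arg}\,z=\pm\pi/4,\pm3\pi/4$, at $z=(1\pm i)(2n+1)\pi/2$ up to symmetry, and each is a simple zero of $\cos z+\cosh z$, hence a triple pole of $f$. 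Computing $\Res$ at a triple pole requires the Taylor expansion of $\cos z+\cosh z$ to third order at such a point; the residue is then a linear combination of $z^{4p+1}$, $z^{4p}$, $z^{4p-1}$ evaluated there (times trigonometric/hyperbolic factors), which after the symmetrization $z\mapsto \pm(1\pm i)$ and $n\mapsto$ all integers collapses to three hyperbolic Eisenstein-type sums of the shape
\begin{align*}
\sum_{n=0}^\infty \frac{(-1)^n(2n+1)^{m}}{\cosh((2n+1)\pi/2)},\qquad \sum_{n=0}^\infty \frac{(-1)^n(2n+1)^{m}}{\cosh^2((2n+1)\pi/2)},\qquad \sum_{n=0}^\infty \frac{(-1)^n(2n+1)^{m}}{\cosh^3((2n+1)\pi/2)}
\end{align*}
with $m$ around $4p+1,4p,4p-1$ (and correspondingly $\sinh$, $\sinh^2$, $\sinh^3$ in the ``$-$'' case, with exponents near $4p-1$), plus the residue at $z=0$ coming from $\sin(\pi z)$, which contributes an explicit rational multiple of $\pi^{4p+2}$ and, because it is a pole of order $\ge 3+1$, may itself produce several powers of $\pi$. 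Matching the vanishing of the contour integral at infinity then yields the target integral as a $\Q[\pi]$-linear combination of exactly these three families of hyperbolic sums.

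The second, and genuinely substantive, step is to evaluate each of these hyperbolic sums in terms of $\Gamma=\Gamma(1/4)$. This is where the Jacobi elliptic machinery from our previous paper enters. At the lemniscatic point $k^2=1/2$ one has $K=K'$, $q=e^{-\pi}$, and the classical formula $K=\Gamma^2/(4\sqrt{\pi})$, so that all ``$q=e^{-\pi}$'' theta/elliptic constants become algebraic multiples of powers of $\Gamma/\sqrt{\pi}$. The Fourier (Lambert-series) expansions of $\dn$, $\cn$, $\sn$ and their products — precisely the expansions recalled/established in the earlier sections of this paper — express sums like $\sum (-1)^n(2n+1)^m/\cosh((2n+1)\pi/2)$ as the $m$-th Maclaurin coefficient of an elliptic function evaluated at $k^2=1/2$, up to a power of $K=\pi^{1/2}\Gamma^{2}/(4\pi)\cdot\pi^{?}$ normalization; I would invoke the six classes of hyperbolic-sum evaluations proved earlier in the paper to read these off directly, and the $\cosh^2$ and $\cosh^3$ (resp.\ $\sinh^2$, $\sinh^3$) sums arise the same way from squares and cubes (equivalently, from derivatives in the nome of the Lambert series, or from Fourier expansions of products of two or three Jacobi functions). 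The bookkeeping on the $\Gamma$-power is the crucial check: an order-$j$ denominator and a degree near $4p$ numerator, combined with the fact that each factor of $K$ at $k^2=1/2$ is $\sim \Gamma^2/\pi^{1/2}$ and each Maclaurin coefficient of a Jacobi function of weight $w$ carries $K^{w}$, should produce exactly the exponents $8p-4,8p+4,8p+12$ over $\pi^{2p-1},\dots,\pi^{2p+7}$ claimed in the ``$+$'' case (and the shifted version in the ``$-$'' case); I would verify the extreme terms $\Gamma^{8p-4}/\pi^{2p-1}$ and $\Gamma^{8p+12}/\pi^{2p+7}$ come from the lowest- and highest-weight contributions respectively, and the three middle terms $\Gamma^{8p+4}/\pi^{2p+1},\pi^{2p+2},\pi^{2p+3}$ from the $\cosh$-, $\cosh^2$-, $\cosh^3$-sums at the ``middle'' weight together with the residue-at-zero contribution.

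The main obstacle I anticipate is neither the contour argument nor the elliptic evaluations individually — both are by now routine extensions of \cite{Berndt2016,XZ2023} — but the combinatorial control of the triple-pole residue and the resulting three-term structure: one must expand $(\cos z+\cosh z)^{-3}$ to the right order at a simple zero of $\cos z+\cosh z$ (a Laurent expansion requiring $\cos z+\cosh z = c_1(z-z_0) + c_2(z-z_0)^2 + c_3(z-z_0)^3+\cdots$ and then $(c_1 u + c_2 u^2 + c_3 u^3)^{-3} = c_1^{-3}u^{-3}(1 - 3(c_2/c_1)u + (6(c_2/c_1)^2 - 3(c_3/c_1))u^2 + \cdots)$), track how the three numerator powers $z^{4p+1}, z^{4p}, z^{4p-1}$ survive the fourfold symmetrization over $\pm(1\pm i)$ without cancelling, and then — the true bottleneck — show that the three surviving hyperbolic sums, which a priori could involve $\Gamma$-powers with many different exponents, in fact land in the five-dimensional $\Q$-space spanned by the five monomials in the theorem. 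Concretely, I expect to need a small linear-algebra lemma (or an explicit recursion in $p$) showing that the higher Maclaurin coefficients of the relevant weight-$\le 9$ Jacobi elliptic functions at $k^2=1/2$ are $\Q$-linear combinations of at most these five normalized $\Gamma$-powers; establishing that closure property — essentially that the graded ring generated by $\sn,\cn,\dn$ at the lemniscatic point has the right dimension in each degree modulo the known relations $\sn^2+\cn^2 = 1$, $\dn^2 + k^2\sn^2 = 1$, $E'$-relations, etc. — is where the real work lies, and it is what pins down both the number of terms (five) and their precise exponents.
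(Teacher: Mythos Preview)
Your overall strategy---contour integration to reduce the integral to hyperbolic series, then Jacobi elliptic evaluation at the lemniscatic point---matches the paper's, but two concrete steps in your proposal are off in ways that would stall the argument.

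First, the contour/kernel setup. There is no $\pi/\sin(\pi z)$ kernel here; inserting one would produce residues at integers $n$ involving $(\cos n+\cosh n)^{-3}$, which has no usable structure. The paper integrates the \emph{bare} function $z^{a}/(\cos z+\cosh z)^3$ over the positively oriented quarter-circle in the first quadrant and exploits the symmetry $z\mapsto iz$ of $\cos z+\cosh z$; this gives the factor $(1-i^{a+1})$ on the left and isolates the single family of poles $z_n=\tilde n(1+i)\pi$ inside. Crucially one uses the factorization
\[
\cos z+\cosh z=2\cosh\Big(\tfrac{1+i}{2}z\Big)\cosh\Big(\tfrac{1-i}{2}z\Big),
\]
so that near $z_n$ only the first factor vanishes and the second factor becomes a genuine hyperbolic $\cosh(\tilde n\pi)$ of a \emph{real} argument. (Likewise $\cos z-\cosh z=-2\sinh(\tfrac{1+i}{2}z)\sinh(\tfrac{i-1}{2}z)$ for the ``$-$'' case.) There is no residue at $z=0$.

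Second, the families of hyperbolic sums that emerge. Because you are computing $\Res_{z=z_n}\big[z^a\cdot(\text{triple pole from }\cosh^{-3}(\tfrac{1+i}{2}z))\cdot\cosh^{-3}(\tfrac{1-i}{2}z)\big]$, the residue requires up to two $z$-derivatives of $z^{a}\cosh^{-3}(\tfrac{1-i}{2}z)$, and differentiating $\cosh^{-3}$ produces $\sinh\cdot\cosh^{-4}$ and $\cosh^{-5}$. So the four series that appear (Corollary~\ref{corcos+cos}) are
\[
\sum \frac{(-1)^n(2n-1)^{4p-1}}{\cosh^3(\tilde n\pi)},\quad
\sum \frac{(-1)^n(2n-1)^{4p+1}}{\cosh^3(\tilde n\pi)},\quad
\sum \frac{(-1)^n(2n-1)^{4p}\sinh(\tilde n\pi)}{\cosh^4(\tilde n\pi)},\quad
\sum \frac{(-1)^n(2n-1)^{4p+1}}{\cosh^5(\tilde n\pi)},
\]
not the $\cosh^{-1},\cosh^{-2},\cosh^{-3}$ triple you anticipate (and analogously $\sinh^{-3},\cosh\cdot\sinh^{-4},\sinh^{-5}$ in the ``$-$'' case). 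These higher-power sums are then not obtained from Fourier expansions of products of Jacobi functions or a closure/linear-algebra lemma, but by differentiating the basic identity \eqref{equ-sd-coffoc} with respect to the modular parameter $x$ (using $dx/dy=-x(1-x)z^2$) to climb from $\cosh^{-1}$ to $\cosh^{-3}$ to $\sinh\cosh^{-4}$ to $\cosh^{-5}$; see Remark~\ref{re-hy-cosh} and Theorems~\ref{cosh},~\ref{sinh}. The five $\Gamma$-monomials in the statement then drop out mechanically from \eqref{special-case-x} once you know $p_{4m-1}(1/2)=p'_{4m+1}(1/2)=\cdots=0$ (and the $R$-analogues), which come from the parity $\text{sd}(iu,\sqrt{1-k^2})=i\,\text{sd}(u,k)^{-1}\cdot(\ldots)$-type symmetry, not from a ring-theoretic closure argument.
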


\section{Relations between Berndt-type Integrals and Series}

In this section, we will establish some explicit relations between the Berndt-type integrals \eqref{BTI-definition-1} with $b=3$ and some hyperbolic summations by using the contour integrations. To save space, throughout this section we will put
\begin{equation*}
\tn=\frac{2n-1}2.
\end{equation*}
We need the following lemma.
\begin{lem}\emph{(cf.\cite{XZ2023})} Let $n$ be an integer. Then we have
\begin{align}
&\frac{(-1)^n}
{{\cosh \left(\frac{1+i}{2}z \right)}}
\buildrel{z\to\tn\pi(1+i)}\over{=\joinrel=\joinrel=\joinrel=\joinrel=\joinrel=} 2i \left\{\frac1{1+i}\cdot\frac{1}{z-\tn\pi(1+i)}\atop +\sum_{k=1}^\infty (-1)^k \frac{{\bar \zeta}(2k)}{\pi^{2k}}\left(\frac{1+i}{2}\right)^{2k-1}\left(z-\tn\pi(1+i)\right)^{2k-1} \right\},\label{eq-cosh-1}\\
&\frac{(-1)^n}
{{\cosh \left(\frac{1-i}{2}z \right)}}
\buildrel{z \to \tn\pi (i-1)}\over{=\joinrel=\joinrel=\joinrel=\joinrel=\joinrel=}    2i \left\{\frac1{1-i}\cdot\frac{1}{z-\tn\pi(i-1)}\atop +\sum_{k=1}^\infty (-1)^k \frac{{\bar \zeta}(2k)}{\pi^{2k}}\left(\frac{1-i}{2}\right)^{2k-1}\left(z-\tn\pi(i-1)\right)^{2k-1} \right\},\label{eq-cosh-2}\\
&\frac{(-1)^n}
{{\sinh \left(\frac{1+i}{2}z \right)}}
\buildrel{z \to n\pi (1+i)}\over{=\joinrel=\joinrel=\joinrel=\joinrel=\joinrel=}   2\left\{\frac1{1+i}\cdot\frac{1}{z-n\pi(1+i)}\atop +\sum_{k=1}^\infty (-1)^k \frac{{\bar \zeta}(2k)}{\pi^{2k}}\left(\frac{1+i}{2}\right)^{2k-1}\left(z-n\pi(1+i)\right)^{2k-1} \right\},\label{eq-sinh-1}\\
&\frac{(-1)^n}
{{\sinh \left(\frac{i-1}{2}z \right)}}
\buildrel{z \to n\pi (1-i)}\over{=\joinrel=\joinrel=\joinrel=\joinrel=\joinrel=}   2\left\{\frac1{i-2}\cdot\frac{1}{z-n\pi(1-i)}\atop +\sum_{k=1}^\infty (-1)^k \frac{{\bar \zeta}(2k)}{\pi^{2k}}\left(\frac{i-1}{2}\right)^{2k-1}\left(z-n\pi(1-i)\right)^{2k-1} \right\}.\label{eq-sinh-2}
\end{align}
\end{lem}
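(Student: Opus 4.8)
\medskip
\noindent\emph{Proof strategy.} The plan is to derive all four expansions from a single classical identity — the Laurent expansion of $1/\sinh$ about the origin — by absorbing the location of the pole into a linear substitution. For \eqref{eq-cosh-1} I would set $u=z-\tn\pi(1+i)$ and $w=\tfrac{1+i}{2}u$; for \eqref{eq-cosh-2}, $u=z-\tn\pi(i-1)$ and $w=\tfrac{1-i}{2}u$; for \eqref{eq-sinh-1}, $u=z-n\pi(1+i)$ and $w=\tfrac{1+i}{2}u$; and for \eqref{eq-sinh-2}, $u=z-n\pi(1-i)$ and $w=\tfrac{i-1}{2}u$. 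In every case, since $(1+i)^2=2i$ and $(i-1)(1-i)=2i$, the argument of the relevant hyperbolic function collapses to $\tn\pi i+w$ (in \eqref{eq-cosh-1}--\eqref{eq-cosh-2}) or to $n\pi i+w$ (in \eqref{eq-sinh-1}--\eqref{eq-sinh-2}).

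First I would trade each left-hand side for a constant multiple of $1/\sinh w$. For \eqref{eq-cosh-1} the addition formula gives
\[
\cosh\bigl(\tfrac{1+i}{2}z\bigr)=\cosh(\tn\pi i+w)=\cos(\tn\pi)\cosh w+i\sin(\tn\pi)\sinh w=(-1)^{n-1}i\,\sinh w,
\]
since $\tn=\tfrac{2n-1}{2}$ forces $\cos(\tn\pi)=0$ and $\sin(\tn\pi)=(-1)^{n-1}$; hence $(-1)^n/\cosh(\tfrac{1+i}{2}z)=i/\sinh w$. Equation \eqref{eq-cosh-2} is handled identically. For \eqref{eq-sinh-1} and \eqref{eq-sinh-2} one instead has $\sinh(n\pi i+w)=i\sin(n\pi)\cosh w+\cos(n\pi)\sinh w=(-1)^n\sinh w$, so there the left-hand side equals $1/\sinh w$ outright; this is precisely why \eqref{eq-sinh-1}--\eqref{eq-sinh-2} carry the prefactor $2$ whereas \eqref{eq-cosh-1}--\eqref{eq-cosh-2} carry $2i$.

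Next I would invoke the Mittag-Leffler expansion of $1/\sinh$: its only singularities are the simple poles $m\pi i$ $(m\in\Z)$ with residue $(-1)^m$, so $1/\sinh w=1/w+\sum_{m\ge1}(-1)^m 2w/(w^2+m^2\pi^2)$, and expanding each summand as a geometric series in $w^2/(m^2\pi^2)$ and interchanging the order of summation (legitimate for $|w|<\pi$ by absolute convergence) yields
\[
\frac{1}{\sinh w}=\frac{1}{w}+2\sum_{k=1}^{\infty}(-1)^k\frac{{\bar\zeta}(2k)}{\pi^{2k}}\,w^{2k-1}\qquad(0<|w|<\pi),
\]
where ${\bar\zeta}(s)=\sum_{m\ge1}(-1)^{m-1}m^{-s}=(1-2^{1-s})\zeta(s)$ (so ${\bar\zeta}(2k)/\pi^{2k}\in\Q$). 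Substituting $w=\tfrac{1+i}{2}u$ back into this and simplifying with $(1+i)(1-i)=2$ — so that $i/w=2i\cdot\tfrac{1}{1+i}\cdot\tfrac1u$ and $w^{2k-1}=\bigl(\tfrac{1+i}{2}\bigr)^{2k-1}u^{2k-1}$ — turns $i/\sinh w$ into exactly the right-hand side of \eqref{eq-cosh-1} once $u$ is replaced by $z-\tn\pi(1+i)$; the other three substitutions produce \eqref{eq-cosh-2}, \eqref{eq-sinh-1}, and \eqref{eq-sinh-2} in the same manner (the last with leading coefficient $1/(i-1)$), each valid on the punctured disc $|w|<\pi$, i.e. $|z-\tn\pi(1+i)|<\sqrt{2}\,\pi$ (resp.\ $|z-n\pi(1\pm i)|<\sqrt{2}\,\pi$).

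I do not anticipate a genuine obstacle here: the lemma is a bookkeeping repackaging of a textbook expansion and is in any case quoted from \cite{XZ2023}. The only points needing care are the complex arithmetic with $1\pm i$, the evaluations of $\cos(\tn\pi)$, $\sin(\tn\pi)$, $\cos(n\pi)$, $\sin(n\pi)$, and the routine justification of the termwise interchange and the radius of convergence.
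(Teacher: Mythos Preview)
Your argument is correct and complete. The paper does not actually supply a proof of this lemma; it merely quotes the result from \cite{XZ2023}, so there is no in-paper argument to compare against. Your route --- reducing each of the four cases to the classical Laurent expansion $\dfrac{1}{\sinh w}=\dfrac{1}{w}+2\sum_{k\ge1}(-1)^k\dfrac{\bar\zeta(2k)}{\pi^{2k}}w^{2k-1}$ via the linear substitution $w=\tfrac{1\pm i}{2}(z-z_0)$ together with the identities $\cosh(\tilde n\pi i+w)=(-1)^{n-1}i\sinh w$ and $\sinh(n\pi i+w)=(-1)^n\sinh w$ --- is exactly the standard derivation, and all of the complex arithmetic and convergence claims check out.

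Your parenthetical remark that \eqref{eq-sinh-2} should carry the leading coefficient $\tfrac{1}{i-1}$ (rather than the printed $\tfrac{1}{i-2}$) is also right: $1/w=2\cdot\tfrac{1}{i-1}\cdot\tfrac{1}{u}$ when $w=\tfrac{i-1}{2}u$, so the paper's $\tfrac{1}{i-2}$ is a typographical slip.
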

Here  $\zeta\left(s\right)$ and $\bar{\zeta}\left(s\right)$ are \emph{Riemann zeta function} and \emph{alternating Riemann zeta function}, respectively, defined by
\begin{align*}
\zeta\left(s\right):=\sum_{n=1}^{\infty}\frac{1}{n^s}\quad \text{and}\quad \bar{\zeta}\left(s\right):=\sum_{n=1}^{\infty}\frac{(-1)^{n-1}}{n^s}\quad\left(\Re\left(s\right)>1\right).
\end{align*}
For even $s=2m\ (m\in \N)$ Euler proved the famous formula
\begin{align}
\zeta(2m) = -\frac12\frac{B_{2m}}{(2m)!}(2\pi i)^{2m},
\end{align}
where $B_{2m}$ are \emph{Bernoulli numbers} defined by the generating function
\begin{equation*}
 \frac{x}{e^x-1}=\sum_{n=0}^\infty \frac{B_n}{n!}x^n.
\end{equation*}

\begin{thm}\label{cos+cosh}
For any integer $a\geq 0$,
\begin{align*}
\frac{\left(1-i^{a+1}\right)}{\pi^{a-1}\left(1+i\right)^{a-1}}\int_{0}^{\infty}\frac{x^{a}\, dx}{\left(\cos x+\cosh x\right)^3}
&=-\frac{a\left(a-1\right)}{4}\sum_{n=1}^{\infty}\frac{(-1)^{n}\tilde{n}^{a-2}}{\cosh^3\left(\tilde{n}\pi\right)}
-\frac{5\pi^2}{2}\sum_{n=1}^{\infty}\frac{(-1)^{n}\tilde{n}^{a}}{\cosh^3\left(\tilde{n}\pi\right)}\nonumber\\
 + &\frac{3a\pi}{2}
\sum_{n=1}^{\infty}\frac{(-1)^{n}\tilde{n}^{a-1}\sinh\left(\tilde{n}\pi\right)}{\cosh^4\left(\tilde{n}\pi\right)}+3\pi^2
\sum_{n=1}^{\infty}\frac{(-1)^{n}\tilde{n}^{a}}{\cosh^5\left(\tilde{n}\pi\right)}.
\end{align*}
\end{thm}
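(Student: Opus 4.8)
The plan is to apply the residue theorem to a carefully chosen meromorphic function and collect the contributions. Specifically, I would integrate
\[
F(z) = \frac{z^a}{(\cos z + \cosh z)^3}
\]
(or a close variant incorporating an auxiliary factor to handle the behavior at infinity and the branch of $z^a$) around a large contour — a positively oriented square or circular contour $C_N$ of "radius" proportional to $N\pi$, avoiding the poles. The function $\cos z + \cosh z$ vanishes precisely where $e^{iz}, e^{-iz}, e^{z}, e^{-z}$ conspire, i.e. at the points $z = \tilde n\pi(1\pm i)$ and $z = \tilde n\pi(-1\pm i)$ (the zeros of $\cosh((1+i)z/2)\cosh((1-i)z/2)$, up to the standard identity $\cos z + \cosh z = 2\cosh(\tfrac{1+i}{2}z)\cosh(\tfrac{1-i}{2}z)$). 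Since the denominator is cubed, each of these is a pole of order three, so the residue computation requires the first three Laurent coefficients of $1/\cosh(\cdot)$ at each zero — and that is exactly what the Lemma (equations \eqref{eq-cosh-1}–\eqref{eq-cosh-2}) supplies, with the $\bar\zeta(2k)$ coefficients encoding the needed expansion. First I would use the Lemma to write the local expansion of $1/(\cos z+\cosh z)^3$ near each pole $z_0 = \tilde n\pi(1+i)$, multiply by the Taylor expansion of $z^a$ about $z_0$, and extract the coefficient of $(z-z_0)^{-1}$; symmetry under $z\mapsto iz$, $z\mapsto \bar z$, and $z\mapsto -z$ relates the four families of poles, so effectively only one residue needs to be computed and the others follow by substitution, which is where the prefactor $(1-i^{a+1})/(\pi^{a-1}(1+i)^{a-1})$ and the combination of four hyperbolic sums on the right-hand side emerge.

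The key steps, in order: (1) establish the factorization $\cos z + \cosh z = 2\cosh(\tfrac{1+i}{2}z)\cosh(\tfrac{1-i}{2}z)$ and locate all poles with multiplicity; (2) from the Lemma, obtain the order-three Laurent expansion of $1/(\cos z + \cosh z)^3$ at $z_0=\tilde n\pi(1+i)$, keeping terms through $(z-z_0)^2$ in the regular part of each $1/\cosh$ factor (the cube of a simple-pole-plus-power-series); (3) compute $\operatorname{Res}_{z=z_0} z^a F(z)$ by combining with $z^a = z_0^a(1 + (z-z_0)/z_0)^a$ expanded to second order, which produces the three structural pieces $a(a-1)\tilde n^{a-2}$, $\tilde n^a$, and the fifth-power and fourth-power-with-$\sinh$ terms after using $\bar\zeta(2) = \pi^2/12$ and simplifying the constants (this is the source of the coefficients $-\tfrac14$, $-\tfrac{5\pi^2}{2}$, $\tfrac{3a\pi}{2}$, $3\pi^2$); (4) sum over $n\in\mathbb Z$ and over the four pole-families, using the symmetries to reduce everything to sums over $n\ge 1$ of the four displayed types, all with $\cosh(\tilde n\pi)$ in the denominator because $|z_0| = \tilde n\pi\sqrt2$ and $\cosh((1-i)z_0/2) = \cosh(\tilde n\pi)$ up to a unimodular factor; (5) show the integral of $F$ over $C_N$ tends to $0$ as $N\to\infty$ (for $a$ in the stated range the growth of $z^a$ is dominated by the exponential decay of $1/(\cos z+\cosh z)^3$ away from the poles — along the diagonals the denominator is bounded below appropriately), and identify the real-axis contribution with $2\int_0^\infty$ after accounting for the parity of $z^a/(\cos z+\cosh z)^3$ and the branch contribution along the relevant rays, which is what introduces the factor $1-i^{a+1}$.

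I expect the main obstacle to be the bookkeeping in step (3): extracting the residue at a triple pole means one must correctly expand a product of three series each of the form $\frac{c_{-1}}{w} + c_1 w + c_3 w^3 + \cdots$ (note the expansions in the Lemma contain only odd powers), multiply by the quadratic Taylor expansion of $z^a$, and collect the $w^{-1}$ coefficient — a finite but error-prone computation in which several $\bar\zeta(2)$ contributions must combine into the clean coefficients above, and one must be careful that the odd-only structure of the Laurent tails forces certain cross-terms to vanish. A secondary subtlety is justifying the vanishing of the contour integral at infinity uniformly in the argument, since near the rays $\arg z = \pm\pi/4, \pm 3\pi/4$ the function $\cos z + \cosh z$ does not grow, so the contour $C_N$ must be chosen (e.g. as a square with vertices on the axes, or slightly rotated) so that on it the denominator stays bounded away from zero by a quantity that still beats the polynomial $|z|^a$; this is handled exactly as in \cite{Berndt2016} and \cite{XZ2023}, and I would invoke that standard estimate rather than redo it. Once these two points are in place, equating the residue sum to the (vanishing) contour integral and rearranging gives the stated identity.
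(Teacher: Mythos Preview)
Your overall plan—residue theorem applied to $F(z)=z^a/(\cos z+\cosh z)^3$, with the factorization $\cos z+\cosh z=2\cosh\bigl(\tfrac{1+i}{2}z\bigr)\cosh\bigl(\tfrac{1-i}{2}z\bigr)$ and the Laurent expansions supplied by the Lemma—is exactly the paper's method. But the contour you describe would not produce the identity. A full closed square or circle $C_N$ enclosing all four pole-families yields only $\oint_{C_N}F\to 0$, hence $\sum\operatorname{Res}=0$: the integral $\int_0^\infty$ never appears, and there is no ``branch contribution along rays'' (for integer $a\ge 0$ the function $z^a$ is entire). The paper instead takes the \emph{quarter}-circle contour $C_R=[0,R]\cup\Gamma_R\cup[iR,0]$ in the first quadrant. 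Only the single family $z_n=\tilde n\pi(1+i)$, $n\ge 1$, lies inside. On the segment $[iR,0]$ one puts $z=iy$ and uses $\cos(iy)+\cosh(iy)=\cosh y+\cos y$ to get $\int_{iR}^0 F=-i^{a+1}\int_0^R x^a/(\cos x+\cosh x)^3\,dx$; together with the $[0,R]$ piece this is precisely the factor $1-i^{a+1}$. No symmetry argument over four families is needed.

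Your description of the residue step also needs a small correction. At $z_n$ only the factor $\cosh\bigl(\tfrac{1+i}{2}z\bigr)$ vanishes; the other factor $\cosh\bigl(\tfrac{1-i}{2}z\bigr)$ is analytic there with value $\cosh(\tilde n\pi)$ (since $\tfrac{1-i}{2}z_n=\tilde n\pi$). So one writes
\[
F(z)=\frac{z^a}{8\cosh^3\bigl(\tfrac{1-i}{2}z\bigr)}\cdot\frac{1}{\cosh^3\bigl(\tfrac{1+i}{2}z\bigr)},
\]
cubes the Lemma's expansion for the second factor (only the $w^{-3}$ and $w^{-1}$ coefficients survive; indeed $\bar\zeta(2)=\pi^2/12$ collapses the constants), and then computes the triple-pole residue via $\tfrac{1}{2}\frac{d^2}{dz^2}$ of the \emph{analytic} piece $z^a/\cosh^3\bigl(\tfrac{1-i}{2}z\bigr)$ at $z=z_n$. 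It is the first and second derivatives of $1/\cosh^3\bigl(\tfrac{1-i}{2}z\bigr)$ at $z_n$ that generate the $\sinh(\tilde n\pi)/\cosh^4(\tilde n\pi)$ and $1/\cosh^5(\tilde n\pi)$ terms, while the derivatives of $z^a$ give the $a(a-1)\tilde n^{a-2}$ and $a\tilde n^{a-1}$ pieces. With this setup the four displayed sums and their coefficients fall out directly.
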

\begin{proof}
Let $z=x+iy$ for $x,y\in \R$. Consider
\begin{align*}
\lim\limits_{R\to\infty}\int_{C_R}\frac{z^a\, dz}{(\cos z+\cosh z)^3}=\lim\limits_{R\to\infty}\int_{C_R}F\left(z\right)\, dz,
\end{align*}
where $C_R$ denotes the positively oriented quarter-circular contour consisting of the interval $[0,R]$, the quarter-circle $\Gamma_R$ with $|z|=R$ and $0\leq \arg z\leq \pi/2$, and $[iR,0]$ (i.e., the line segment from $i R$ to $0$ on the imaginary axis).

Clearly, there exist poles of order $3$ when
\begin{align*}
\cos z+\cosh z=2\cos\left\{\frac1{2}(z+iz)\right\}\cos\left\{\frac1{2}(z-iz)\right\}=0.
\end{align*}
Hence, there exist one set of poles at
\begin{align*}
z_n:=\frac{(2n-1)\pi i}{1+i}=\tn (1+i)\pi ,\quad n\in \Z,
\end{align*}
and
\begin{align*}
s_n:=\frac{(2n-1)\pi i}{1-i}=\tn(i-1)\pi ,\quad n\in \Z.
\end{align*}
The only poles lying inside $C_R$ are $z_n,\ n\geq 1,\ |z_n|<R$. From \eqref{eq-cosh-1}, we have
\begin{align*}
\frac{1}{\cosh^3\left(\frac{1+i}{2}z\right)}\buildrel{z\rightarrow z_n}\over{=\joinrel=}
(-1)^n i^3\left[\left(\frac{2}{1+i}\right)^3\frac{1}{\left(z-z_n\right)^3}-\frac{1}{1+i}\frac{1}{z-z_n}+o\left(1\right)\right].
\end{align*}
Hence, if $z\rightarrow z_n$ then
\begin{align*}
F\left(z\right)\buildrel{z\rightarrow z_n}\over{=\joinrel=}
\frac{z^a}{8\cosh^3\left(\frac{1-i}{2}z\right)}\left\{(-1)^n i^3\left[\left(\frac{2}{1+i}\right)^3\frac{1}{\left(z-z_n\right)^3}-\frac{1}{1+i}\frac{1}{z-z_n}+o\left(1\right)\right]\right\}.
\end{align*}
The residue $\underset{z=z_n}\Res \{F(z)\}$ at such a pole $z_n$ is give by
\begin{align*}
\underset{z=z_n}\Res \{F(z)\}=&\lim\limits_{z\to z_n}\frac{d^2}{\, dz^2}\frac{z^a(-1)^ni^3}{2^4\cosh^3\left(\frac{1-i}{2}z\right)}\left[\left(\frac{2}{1+i}\right)^3
-\frac{\left(z-z_n\right)^2}{1+i}\right]\\=&-\frac{(-1)^n\left(1-i\right)\left(1+i\right)^{a-2}\left(\tilde{n}\pi\right)^{a-2}\left(a-1\right)a}{2^3\cosh^3\left(\tilde{n}\pi\right)}
-\frac{(-1)^n5\left(1-i\right)\left(1+i\right)^{a-2}\left(\tilde{n}\pi\right)^{a}}{4\cosh^3\left(\tilde{n}\pi\right)}\\&+\frac{(-1)^n3\left(1-i\right)\left(1+i\right)^{a-2}
\left(\tilde{n}\pi\right)^{a-1}a\sinh\left(\tilde{n}\pi\right)}{4\cosh^4\left(\tilde{n}\pi\right)}+\frac{(-1)^n3\left(1-i\right)\left(1+i\right)^{a-2}\left(\tilde{n}\pi\right)^{a}}{2\cosh^5\left(\tilde{n}\pi\right)}.
\end{align*}
Note that $z=i y$ for the integral over $[iR,0]$. Hence,
\begin{align*}
\int_{iR}^{0}\frac{z^a\, dz}{(\cos z+\cosh z)^3}=-i^{a+1}\int_{0}^{R}\frac{y^a\, dy}{\left(\cos y+\cosh y\right)^3}.
\end{align*}
It is easy to show that, as $R\rightarrow \infty$
\begin{align*}
\lim\limits_{R\to\infty}\int_{\Gamma_R}\frac{z^a\, dz}{(\cos z+\cosh z)^3}=o\left(1\right).
\end{align*}
Applying the residue theorem, letting $R\rightarrow \infty$, we conclude that
\begin{align*}
\left(1-i^{a+1}\right)\int_{0}^{\infty}\frac{x^a\, dx}{\left(\cos x+\cosh x\right)^3}=2\pi i\sum_{n=1}^{\infty}\underset{z=z_n}\Res \{F(z)\}.
\end{align*}
This completes the proof of the theorem.
\end{proof}

\begin{thm} \label{cos-cosh} For any integer $a\ge6$, we have
\begin{multline*}
\frac{\left(1+i^{a+1}\right)}{\pi^{a-1}(1+i)^{a-3}}\int_{0}^{\infty}\frac{x^a\, dx}{(\cos x-\cosh x)^3}
=-\binom{a}{2}\sum_{n=1}^{\infty}\frac{(-1)^nn^{a-2}}{\sinh^3(n\pi)}
+3a\pi\sum_{n=1}^{\infty}\frac{(-1)^n\cosh(n\pi)n^{a-1}}{\sinh^4(n\pi)} \\
-5\pi^2\sum_{n=1}^{\infty}\frac{(-1)^nn^{a}}{\sinh^3(n\pi)}-6\pi^2\sum_{n=1}^{\infty}\frac{(-1)^nn^{a}}{\sinh^5(n\pi)}.
\end{multline*}
\end{thm}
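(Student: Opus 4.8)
The plan is to repeat the contour-integration scheme used in the proof of Theorem~\ref{cos+cosh}, replacing the product identity for $\cos z+\cosh z$ by
\[
\cos z-\cosh z=\cosh(iz)-\cosh z=2\sinh\!\left(\frac{1+i}{2}z\right)\sinh\!\left(\frac{i-1}{2}z\right),
\]
so that $(\cos z-\cosh z)^3=8\sinh^3\!\left(\tfrac{1+i}{2}z\right)\sinh^3\!\left(\tfrac{i-1}{2}z\right)$. Set $F(z)=z^a/(\cos z-\cosh z)^3$ and integrate $F$ over the same quarter-circular contour $C_R$ (the segments $[0,R]$ and $[iR,0]$ together with the arc $\Gamma_R$). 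Since $\cos z-\cosh z=-z^2+O(z^6)$, one has $F(z)=-z^{a-6}+O(z^{a-2})$ near $0$, so the hypothesis $a\ge6$ is exactly what makes $F$ regular at the origin (and the integral convergent there); thus $z=0$ contributes nothing. The only singularities are the triple poles $z_n=n\pi(1+i)$ and $s_n=n\pi(1-i)$, $n\in\Z\setminus\{0\}$, and only the $z_n$ with $n\ge1$, $|z_n|=n\pi\sqrt2<R$, lie inside $C_R$.

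Next I would handle the two boundary pieces. On $[iR,0]$, writing $z=iy$ and using $\cos(iy)-\cosh(iy)=\cosh y-\cos y=-(\cos y-\cosh y)$ gives $\int_{iR}^{0}F\,dz=i^{a+1}\int_0^{R}y^a\,dy/(\cos y-\cosh y)^3$, which is the source of the prefactor $1+i^{a+1}$. For $\Gamma_R$ I would choose $R=R_N\to\infty$ staying a bounded distance from all poles, e.g.\ $R_N=(N+\tfrac12)\pi\sqrt2$: on $0\le\theta\le\pi/2$ the argument of $\sinh\!\left(\tfrac{i-1}{2}z\right)$ has real part $-\tfrac R2(\cos\theta+\sin\theta)$, of absolute value $\ge R/2$, so $\bigl|\sinh\!\left(\tfrac{i-1}{2}z\right)\bigr|\ge\sinh(R/2)$, and together with a uniform lower bound for $\bigl|\sinh\!\left(\tfrac{1+i}{2}z\right)\bigr|$ on $\Gamma_{R_N}$ (the delicate region being $\theta\approx\pi/4$, where the choice of $R_N$ keeps us away from the zeros) this forces $\int_{\Gamma_R}F\to0$. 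The residue theorem then gives $(1+i^{a+1})\int_0^\infty x^a\,dx/(\cos x-\cosh x)^3=2\pi i\sum_{n\ge1}\Res_{z=z_n}F$.

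The core of the argument is the residue at the order-three pole $z_n$. Cubing the Laurent expansion \eqref{eq-sinh-1} (and using $\bar\zeta(2)=\pi^2/12$) yields
\[
\frac{1}{\sinh^3\!\left(\tfrac{1+i}{2}z\right)}\ \overset{z\to z_n}{=}\ (-1)^n\!\left[\left(\frac{2}{1+i}\right)^{3}\frac{1}{(z-z_n)^3}-\frac{1-i}{2}\cdot\frac{1}{z-z_n}+o(1)\right],
\]
hence $(z-z_n)^3F(z)=\dfrac{(-1)^n z^a}{8\sinh^3\!\left(\tfrac{i-1}{2}z\right)}\!\left[\left(\tfrac{2}{1+i}\right)^{3}-\tfrac{1-i}{2}(z-z_n)^2\right]+o\bigl((z-z_n)^3\bigr)$. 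Applying $\Res_{z=z_n}F=\tfrac{1}{2!}\lim_{z\to z_n}\tfrac{d^2}{dz^2}\bigl[(z-z_n)^3F(z)\bigr]$, the two derivatives act on $z^a$, on the cross term, and on $\sinh^{-3}\!\left(\tfrac{i-1}{2}z\right)$; writing $\cosh^2=1+\sinh^2$ in $\tfrac{d^2}{dz^2}\sinh^{-3}$ produces both a $\sinh^{-3}$ and a $\sinh^{-5}$ term. Using $\sinh\!\left(\tfrac{i-1}{2}z_n\right)=-\sinh(n\pi)$, $\cosh\!\left(\tfrac{i-1}{2}z_n\right)=\cosh(n\pi)$, $z_n^a=n^a\pi^a(1+i)^a$, and collecting powers of $1+i$ via $(1-i)(1+i)=2$, $(1+i)^2=2i$, $\left(\tfrac{2}{1+i}\right)^3=-2(1+i)$, the residue becomes $(-1)^n(1+i)^{a-1}$ times a rational combination of $n^{a-2}\pi^{a-2}/\sinh^3(n\pi)$, $n^{a-1}\pi^{a-1}\cosh(n\pi)/\sinh^4(n\pi)$, $n^a\pi^a/\sinh^3(n\pi)$ and $n^a\pi^a/\sinh^5(n\pi)$. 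Finally, multiplying by $2\pi i$ and dividing by the normalizing prefactor $\pi^{a-1}(1+i)^{a-3}$ — which amounts to scaling the summand by $2i(1+i)^2=-4$ — and regrouping powers of $\pi$ produces exactly the coefficients $-\binom a2$, $3a\pi$, $-5\pi^2$, $-6\pi^2$.

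The main obstacle is not any individual step but the algebraic bookkeeping in the last paragraph: one must track every power of $i$ (from $z_n^a=n^a\pi^a(1+i)^a$, from the argument $\tfrac{i-1}{2}z$ of the analytic $\sinh$-factor, and from the Laurent coefficients) through the second derivative and verify that they collapse to the single factor $(1+i)^{a-1}$, so that dividing by $\pi^{a-1}(1+i)^{a-3}$ leaves purely rational coefficients equal to the four stated constants. The arc estimate, by contrast, is routine here, since the factor $\sinh\!\left(\tfrac{i-1}{2}z\right)$ is uniformly exponentially large along $\Gamma_R$.
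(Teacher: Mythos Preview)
Your proposal is correct and follows essentially the same contour-integration approach as the paper: the same quarter-circle contour, the same factorization $\cos z-\cosh z=2\sinh\!\bigl(\tfrac{1+i}{2}z\bigr)\sinh\!\bigl(\tfrac{i-1}{2}z\bigr)$ (the paper writes it with $\sin$, which is equivalent), the same cubing of the Laurent expansion \eqref{eq-sinh-1} to isolate the triple pole at $y_n=n\pi(1+i)$, and the same residue-theorem conclusion. If anything, you are more explicit than the paper about why $a\ge6$ is needed (regularity of $F$ at the origin), about the arc estimate via a sequence $R_N$ avoiding the poles, and about how $\cosh^2=1+\sinh^2$ splits the second derivative into the $\sinh^{-3}$ and $\sinh^{-5}$ contributions.
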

\begin{proof}
Let $z=x+iy$ for $x,y\in \R$. Consider
\begin{align*}
\lim\limits_{R\to\infty}\int_{C_R}\frac{z^a\, dz}{\left(\cos z-\cosh z\right)^3}=\lim\limits_{R\to\infty}\int_{C_R}G\left(z\right)\, dz,
\end{align*}
where $C_R$ denotes the positively oriented quarter-circular contour consisting of the interval $[0,R]$; the quarter-circle $\Gamma_R$ with $|z|=R$ and $0\leq \arg\leq \pi/2$; and the segment $[i R,0]$ on the imaginary axis.
Clearly, there exist poles of order $3$ in the present case when
\begin{align*}
\cos z-\cosh z=-2\sin\left\{\frac1{2}(z+iz)\right\}\sin\left\{\frac1{2}(z-iz)\right\}=0.
\end{align*}
Hence, there exist one set of poles at $t_n=n\pi(1-i)$ and $y_n=n\pi(1+i)$, where $n\in\N$. Those lying on the interior of $C_R$ are $y_n=n\pi (1+i),n\geq 1,|y_n|<R$. Hence, by \eqref{eq-sinh-1}, we have
\begin{align*}
G(z)\buildrel{z\rightarrow y_n}\over{=\joinrel=}\frac{(-1)^nz^a}{8\sinh^3\left(\frac{i-1}{2}z\right)} \left\{\frac{8}{(1+i)^3}\frac{1}{(z-y_n)^3}-\frac{1}{z-y_n}\frac{1}{1+i}\right\}.
\end{align*}
Further, the residue is
\begin{align*}
\underset{z=y_n}\Res  \{G(z)\}=&\lim\limits_{z\to y_n}\frac{d^2}{\, dz^2}\frac{z^a(-1)^n}{2^4\sinh^3\left(\frac{i-1}{2}z\right)}\left[\left(\frac{2}{1+i}\right)^3
-\frac{\left(z-y_n\right)^2}{1+i}\right]\\=&-\frac{(-1)^na\left(a-1\right)(n\pi)^{a-2}\left(1+i\right)^{a-4}\left(1-i\right)}{4\sinh^3(n\pi)}
-\frac{(-1)^n3(n\pi)^{a}\left(1+i\right)^{a-4}\left(1-i\right)}{\sinh^5(n\pi)}\\&
-\frac{(-1)^n5(n\pi)^{a}\left(1+i\right)^{a-4}\left(1-i\right)}{2\sinh^3(n\pi)}
+\frac{(-1)^n3a(n\pi)^{a-1}\left(1+i\right)^{a-4}\left(1-i\right)\cosh(n\pi)}{2\sinh^4(n\pi)}.
\end{align*}
With $z=iy$, we see that the integral over $[iR,0]$
\begin{align*}
\int_{iR}^{0}\frac{z^a\, dz}{\left(\cos z-\cosh z\right)^3}=i^{a+1}\int_{0}^{R}\frac{y^a\, dy}{(\cos y-\cosh y)^3}.
\end{align*}
Applying the residue theorem, as $R\rightarrow\infty$, we conclude that
\begin{align*}
\left(1+i^{a+1}\right)\int_{0}^{\infty}\frac{x^a\, dx}{(\cos x-\cosh x)^3}
=2\pi i\sum_{n=1}^{\infty}\underset{z=y_n}\Res  \{G(z)\}.
\end{align*}
This completes the proof of the theorem.
\end{proof}

Setting $a=4p + 1$ and $a = 4p - 1$ in Theorems \ref{cos+cosh} and \ref{cos-cosh}, we get the following corollaries.
\begin{cor}\label{corcos+cos}
For any integer $p\ge 0$, then
\begin{align*}
\frac{(-4)^{p+1}}{\pi^{4p}}\int_{0}^{\infty}\frac{x^{4p+1}\, dx}{(\cos x+\cosh x)^3}
=& 4p(4p+1)\sum_{n=1}^{\infty}\frac{(-1)^{n}(2n-1)^{4p-1}}{\cosh^3(\tilde{n}\pi)}
 +\frac{5\pi^2}{2}\sum_{n=1}^{\infty}\frac{(-1)^{n}(2n-1)^{4p+1}}{\cosh^3(\tilde{n}\pi)}\\
-3(4p+1)\pi &\sum_{n=1}^{\infty}\frac{(-1)^{n}(2n-1)^{4p}\sinh(\tilde{n}\pi)}{\cosh^4(\tilde{n}\pi)}
 -3\pi^2\sum_{n=1}^{\infty}\frac{(-1)^{n}(2n-1)^{4p+1}}{\cosh^5(\tilde{n}\pi)}.
\end{align*}
\end{cor}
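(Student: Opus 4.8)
The plan is to obtain Corollary \ref{corcos+cos} from Theorem \ref{cos+cosh} purely by specialization: set $a=4p+1$ and simplify the resulting powers of $i$, $1+i$ and $2$. No further analysis is needed, since all the analytic content already resides in Theorem \ref{cos+cosh}.

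First I would evaluate the constant multiplying the integral on the left of Theorem \ref{cos+cosh} when $a=4p+1$. Since $i^{a+1}=i^{4p+2}=i^{2}=-1$ we get $1-i^{a+1}=2$; since $(1+i)^{4}=(2i)^{2}=-4$ we get $(1+i)^{a-1}=(1+i)^{4p}=(-4)^{p}$; and $\pi^{a-1}=\pi^{4p}$. Hence the left-hand side of Theorem \ref{cos+cosh} becomes $\frac{2}{(-4)^{p}\pi^{4p}}\int_{0}^{\infty}\frac{x^{4p+1}\,dx}{(\cos x+\cosh x)^{3}}$. On the right-hand side I would use $\tilde{n}=(2n-1)/2$ to rewrite each half-integer power $\tilde{n}^{a-j}$ as $(2n-1)^{4p+1-j}/2^{4p+1-j}$, so that the four hyperbolic series become exactly the four series appearing on the right-hand side of the corollary, each carrying an explicit factor $2^{-(a-2)}$, $2^{-a}$, $2^{-(a-1)}$, $2^{-a}$ respectively; I would also use $a(a-1)=4p(4p+1)$.

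Finally I would multiply the whole identity through by $-2^{4p+1}$. On the left this gives $(-2^{4p+1})\cdot\frac{2}{(-4)^{p}\pi^{4p}}=\frac{-2^{4p+2}}{(-4)^{p}\pi^{4p}}=\frac{-4\cdot 16^{p}}{(-4)^{p}\pi^{4p}}=\frac{(-4)^{p+1}}{\pi^{4p}}$, using $16^{p}/(-4)^{p}=(-4)^{p}$, which is exactly the left-hand side of the corollary. On the right, a one-line check of each of the four terms (the powers of $2$ that survive are $4$, $1$, $2$ and $1$ respectively) produces the coefficients $4p(4p+1)$, $\frac{5\pi^{2}}{2}$, $-3(4p+1)\pi$ and $-3\pi^{2}$, which is precisely the asserted identity.

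The only point requiring care is the sign and power-of-$2$ bookkeeping in the last step; there is no genuine obstacle, since the quarter-circle contour, the vanishing of the integral over $\Gamma_{R}$, and the order-three residue computation are all already carried out in the proof of Theorem \ref{cos+cosh}. I would also note that the range $p\ge 0$ is exactly the image of the hypothesis $a\ge 0$ of Theorem \ref{cos+cosh} under $a=4p+1$, and that the integral and all four series converge, the series because $1/\cosh^{k}(\tilde{n}\pi)$ decays exponentially in $n$.
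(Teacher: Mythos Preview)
Your proposal is correct and follows exactly the paper's approach: the paper derives Corollary~\ref{corcos+cos} simply by ``setting $a=4p+1$'' in Theorem~\ref{cos+cosh}, and your argument carries out precisely that specialization together with the routine simplification of $1-i^{a+1}$, $(1+i)^{a-1}$, and the powers of $2$ coming from $\tilde{n}=(2n-1)/2$. Your bookkeeping checks out line by line, so there is nothing to add.
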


\begin{cor}\label{corcos-cosh}For any integer $p\ge2$, then
\begin{align*}
&\frac{(-1)^{p-1}}{\pi^{4p-2}2^{2p-3}}\int_{0}^{\infty}\frac{x^{4p-1}\, dx}{(\cos x-\cosh x)^3}\\
=&-\frac{(4p-1)(4p-2)}{2}\sum_{n=1}^{\infty}\frac{(-1)^nn^{4p-3}}{\sinh^3(n\pi)}
 +3(4p-1)\pi\sum_{n=1}^{\infty}\frac{(-1)^n\cosh(n\pi)n^{4p-2}}{\sinh^4(n\pi)}\\
&-5\pi^2\sum_{n=1}^{\infty}\frac{(-1)^nn^{4p-1}}{\sinh^3(n\pi)}-6\pi^2\sum_{n=1}^{\infty}\frac{(-1)^nn^{4p-1}}{\sinh^5(n\pi)}.
\end{align*}
\end{cor}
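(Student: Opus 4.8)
The plan is to obtain Corollary~\ref{corcos-cosh} as the specialization $a=4p-1$ of Theorem~\ref{cos-cosh}, with no new analytic input required. First I would check that the substitution is legitimate: Theorem~\ref{cos-cosh} is stated for integers $a\ge 6$, and $4p-1\ge 6$ holds exactly when $p\ge 2$, which is precisely the hypothesis of the corollary. Hence for every admissible $p$ the identity of Theorem~\ref{cos-cosh} may be applied verbatim with $a=4p-1$, and it remains only to simplify both sides.

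The one genuinely computational point is the scalar prefactor on the left. With $a=4p-1$ one has $i^{a+1}=i^{4p}=1$, so $1+i^{a+1}=2$; and since $(1+i)^2=2i$ gives $(1+i)^4=-4$, we get
\[
(1+i)^{a-3}=(1+i)^{4p-4}=\bigl((1+i)^4\bigr)^{p-1}=(-4)^{p-1}=(-1)^{p-1}2^{2p-2}.
\]
Therefore
\[
\frac{1+i^{a+1}}{\pi^{a-1}(1+i)^{a-3}}=\frac{2}{(-1)^{p-1}2^{2p-2}\,\pi^{4p-2}}=\frac{(-1)^{p-1}}{2^{2p-3}\,\pi^{4p-2}},
\]
where in the last step I use that $(-1)^{p-1}$ is its own reciprocal to move the sign out of the denominator. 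This is exactly the coefficient appearing on the left of the corollary.

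Finally I would substitute $a=4p-1$ into the right-hand side of Theorem~\ref{cos-cosh}: the binomial coefficient becomes $\binom{a}{2}=\binom{4p-1}{2}=\tfrac{(4p-1)(4p-2)}{2}$, the factor $3a\pi$ becomes $3(4p-1)\pi$, and the exponents $a-2,\,a-1,\,a$ become $4p-3,\,4p-2,\,4p-1$, while the two terms with coefficients $-5\pi^2$ and $-6\pi^2$ are unchanged. Reading off the result yields the stated identity; Corollary~\ref{corcos+cos} follows in the same way from Theorem~\ref{cos+cosh} with $a=4p+1$. I do not expect any real obstacle here — the only step demanding a little care is the bookkeeping of the power of $1+i$ and the resulting sign, as shown above.
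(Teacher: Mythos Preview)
Your proposal is correct and follows exactly the paper's approach: the corollary is obtained by setting $a=4p-1$ in Theorem~\ref{cos-cosh}, and your careful bookkeeping of the prefactor (using $i^{4p}=1$ and $(1+i)^{4}=-4$) just makes explicit what the paper leaves to the reader.
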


Therefore, in order to obtain the evaluations of the integrals on the left hand sides of Corollary \ref{corcos+cos} and \ref{corcos-cosh}, we will need to establish some explicit evaluations of the six classes of hyperbolic sums:
\begin{align*}
&\sum_{n=1}^{\infty}\frac{(-1)^{n}(2n-1)^{2p-1}}{\cosh^3(\tilde{n}\pi)},\quad \sum_{n=1}^{\infty}\frac{(-1)^{n}(2n-1)^{4p}\sinh(\tilde{n}\pi)}{\cosh^4(\tilde{n}\pi)},\quad \sum_{n=1}^{\infty}\frac{(-1)^{n}(2n-1)^{4p+1}}{\cosh^5(\tilde{n}\pi)}
\end{align*}
and
\begin{align*}
\sum_{n=1}^{\infty}\frac{(-1)^nn^{2p-1}}{\sinh^3(n\pi)},\quad \sum_{n=1}^{\infty}\frac{(-1)^n\cosh(n\pi)n^{4p-2}}{\sinh^4(n\pi)},\quad \sum_{n=1}^{\infty}\frac{(-1)^nn^{4p-1}}{\sinh^5(n\pi)}.
\end{align*}
In next section, we will give the explicit evaluations of above six classes of hyperbolic sums by applying the Fourier series expansions and the Maclaurin series expansions of a few Jacobi elliptic functions.

\section{Evaluations of hyperbolic summations via Jacobi functions}

In this section, we will apply the Fourier series expansions and power series expansions of Jacobi elliptic functions ${\rm sd}$ and ${\rm sn}$ to establish some explicit evaluations of hyperbolic summations.

Recall that the Jacobi elliptic function $\sn (u)=\sn (u,k)$ is defined via the inversion of the elliptic integral
\begin{align}
u=\int_0^\varphi \frac{dt}{\sqrt{1-k^2\sin^2 t}}\quad (0<k^2<1),
\end{align}
namely, $\sn (u):=\sin \varphi$. As before, we refer $k={\rm mod}\, u$ as the elliptic modulus. We also write $\varphi={\rm am}(u,k)={\rm am}(u)$ and call it the Jacobi amplitude. Then the Jacobi elliptic functions $\cn u$ and $\dn u$ may be defined by
\begin{align*}
&\cn (u):=\sqrt{1-\sn^2 (u)}\quad\text{and} \quad  \dn (u):=\sqrt{1-k^2\sn^2(u)}.
\end{align*}
Further, we define the Jacobi elliptic function ${\rm sd}$ by
\begin{align*}
{\rm sd}:=\frac{\sn (u)}{\dn (u)}.
\end{align*}

In order to better state our main results, we shall henceforth adopt the notations of Ramanujan (see Berndt's book \cite{B1991}). Let
\begin{align}\label{rel-den}
x:=k^2,\quad y:=y(x):=\pi \frac {K'}{K}, \quad q:=q(x):=e^{-y},\quad z:=z(x):=\frac {2}{\pi}K,\quad z':=\frac{\, dz}{dx}.
\end{align}
Using the identity $(a)_{n+1}=a(a+1)_n$, it is easily shown that
\[\frac {d}{dx}{_2}F_1(a,b;c;x)=\frac{ab}{c}{_2}F_1(a+1,b+1;c+1;x)\]
and more generally,
\begin{align}
\frac {d^n}{dx^n}{_2}F_1(a,b;c;x)=\frac{(a)_n(b)_n}{(c)_n}{_2}F_1(a+n,b+n;c+n;x)\quad(n\in\N_0).\label{1.7}
\end{align}
Then,
\begin{align}
\frac {d^nz}{dx^n}=\frac{(1/2)^2_n}{n!}{_2}F_1\left(\frac{1}{2}+n,\frac{1}{2}+n;1+n;x\right).
\end{align}
Applying the identity (see  \cite[Theorem 3.5.4(i)]{A2000})
\begin{align}
{_2}F_1\left(a,b;\frac {a+b+1}{2};\frac 1{2}\right)=\frac{\Gamma\left(\frac{1}{2}\right)\Gamma\left(\frac{a+b+1}{2}\right)}{\Gamma\left(\frac{a+1}{2}\right)\Gamma\left(\frac{b+1}{2}\right)},
\end{align}
we can show by an elementary calculation that
\begin{align}\label{den-z-diff}
\frac {d^nz}{dx^n}\bigg|_{x=1/2}=\frac{(1/2)^2_n\sqrt{\pi}}{\Gamma^2\left(\frac{n}{2}+\frac {3}{4}\right)}.
\end{align}
In particular, taking $x=1/2,n=0,1,2,3,4$ in \eqref{den-z-diff}, we obtain that
\begin{align}\label{special-case-x}
y=\pi,\quad & z\Big(\frac 1{2}\Big)=\frac{\Gamma^2(1/4)}{2\pi^{3/2}},\quad z'\Big(\frac 1{2}\Big)=\frac {4\sqrt{\pi}}{\Gamma^2(1/4)},\quad z''\Big(\frac 1{2}\Big)=\frac{\Gamma^2(1/4)}{2\pi^{3/2}},\nonumber\\
&z^{\left(3\right)}\left(\frac{1}{2}\right)=\frac{36\sqrt{\pi}}{\Gamma^2(1/4)},\quad z^{\left(4\right)}\left(\frac{1}{2}\right)=\frac{25\Gamma^2(1/4)}{2\pi^{3/2}},
\end{align}
where we have used the two classical relations
\[\Gamma(x+1)=x\Gamma(x)\quad{\rm and}\quad \Gamma(x)\Gamma(1-x)=\frac{\pi}{\sin(\pi x)}.\]

To begin with, we record the Fourier series expansions of Jacobi elliptic functions sd$\left(u\right)$ and sn$\left(u\right)$ (see \cite{DCLMRT1992})
\begin{align}\label{sd}
&\text{sd}\left(u\right)=\frac{2\pi}{Kkk'}\sum_{n=0}^{\infty}\frac{(-1)^nq^{n+1/2}}{1+q^{2n+1}}\sin\left[(2n+1)\frac{\pi u}{2K}\right],\\
&\text{sn}\left(u\right)=\frac{2\pi}{Kk}\sum_{n=0}^{\infty}\frac{q^{n+1/2}}{1-q^{2n+1}}\sin\left[(2n+1)\frac{\pi u}{2K}\right].
\end{align}

\subsection{Part I}

\begin{lem}
The Maclaurin series expansion of ${\rm sd}(u)$ has the form
\begin{equation}\label{equ:cor-sd}
    \sum_{m\ge0}\frac{p_{2m+1}(x)}{\left(2m+1\right)!}u^{2m+1}, \quad\text{where}\quad p_{2m+1}(x)\in\Z[x] \ \forall m\ge 0.
\end{equation}
\end{lem}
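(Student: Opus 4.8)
\smallskip
\noindent\textbf{Proof strategy.} The plan is to derive a second-order nonlinear ODE for $w(u):={\rm sd}(u)$ whose coefficients lie in $\Z[x]$ and whose initial data are integers, and then to read off the integrality of the normalised Taylor coefficients $p_{2m+1}(x)$ by a direct induction. Since $\sn(u)$ is odd and $\dn(u)$ is even, the quotient ${\rm sd}(u)=\sn(u)/\dn(u)$ is an odd function of $u$, so its Maclaurin expansion indeed has the asserted shape $\sum_{m\ge0}p_{2m+1}(x)u^{2m+1}/(2m+1)!$ with $p_1(x)=1$, and only the integrality of the $p_{2m+1}$ remains. Using the standard rules $\sn'=\cn\dn$, $\cn'=-\sn\dn$, $\dn'=-k^2\sn\cn$ together with $\cn^2=1-\sn^2$ and $\dn^2=1-k^2\sn^2$, one first finds $w'=\cn/\dn^2$; then, writing $\sn=w\dn$ and using the resulting identities $\dn^{-2}=1+k^2w^2$ and $\cn^2=(1-k'^2w^2)/(1+k^2w^2)$ to eliminate $\sn,\cn,\dn$ in favour of $w$, a short computation yields
\begin{equation*}
w''=(2x-1)\,w-2x(1-x)\,w^3,\qquad w(0)=0,\quad w'(0)=1,
\end{equation*}
where $k^2=x$ and $k'^2=1-x$ have been substituted. (Equivalently, one may start from the first-order form $(w')^2=(1-k'^2w^2)(1+k^2w^2)$ and differentiate once.)

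Next I would convert this ODE into a recursion for the $p_{2m+1}$ using the exponential normalisation: if $v=\sum_j c_j u^j/j!$ and $\tilde v=\sum_j\tilde c_j u^j/j!$ then $v\tilde v=\sum_j\big(\sum_i\binom{j}{i}c_i\tilde c_{j-i}\big)u^j/j!$, so the coefficient of $u^j/j!$ in a product is a $\Z$-linear combination of products of the factors' coefficients. Hence $w^3=\sum_{m\ge0}q_{2m+1}(x)u^{2m+1}/(2m+1)!$, where $q_{2m+1}(x)$ is an explicit $\Z$-combination of products $p_i(x)p_j(x)p_k(x)$ with $i,j,k$ odd and $i+j+k=2m+1$, so in particular $i,j,k\le 2m-1$. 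Comparing the coefficients of $u^{2m+1}/(2m+1)!$ on the two sides of the ODE gives the recursion
\begin{equation*}
p_{2m+3}(x)=(2x-1)\,p_{2m+1}(x)-2x(1-x)\,q_{2m+1}(x).
\end{equation*}

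Finally I would run the induction: $p_1(x)=1\in\Z[x]$, and assuming $p_1,p_3,\dots,p_{2m+1}\in\Z[x]$ we obtain $q_{2m+1}\in\Z[x]$ because $\Z[x]$ is closed under addition and multiplication, whence $p_{2m+3}\in\Z[x]$ since $2x-1$ and $2x(1-x)$ lie in $\Z[x]$; this proves the lemma. I do not expect a genuine obstacle here. The only two points requiring care are (i) deriving the ODE in exactly the stated form, where the elimination of $\sn,\cn,\dn$ in favour of $w$ must be carried out correctly, and (ii) working with the factorial-normalised coefficients $p_{2m+1}/(2m+1)!$ rather than the bare Taylor coefficients — this is precisely the bookkeeping device that keeps denominators out of the recursion and makes the induction go through.
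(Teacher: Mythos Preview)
Your argument is correct. The ODE
\[
w''=(2x-1)\,w-2x(1-x)\,w^{3},\qquad w(0)=0,\ w'(0)=1,
\]
is derived cleanly (the identities $\dn^{-2}=1+k^{2}w^{2}$ and $\cn^{2}=(1-k'^{2}w^{2})/(1+k^{2}w^{2})$ make the elimination immediate), and the exponential-coefficient bookkeeping produces exactly the recursion $p_{2m+3}=(2x-1)p_{2m+1}-2x(1-x)q_{2m+1}$ with $q_{2m+1}$ a $\Z$-bilinear (in fact $\Z$-trilinear) expression in the earlier $p$'s; the induction then closes since $\Z[x]$ is a ring.

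As for comparison with the paper: the authors do not write out a proof here, instead pointing to \cite[Lemma~7.1]{XZ2023} and saying the argument is ``completely similar.'' That earlier lemma concerns another Jacobi elliptic function, and the standard proof there is precisely the ODE-plus-recursion method you have carried out; so your approach is the expected one. One could alternatively avoid deriving the ODE for ${\rm sd}$ from scratch by quoting the analogous integrality results for $\sn$ and $\dn$ and then controlling the quotient, but your route via the single nonlinear ODE is cleaner and self-contained.
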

\begin{proof}
The proof is completely similar to the proof of \cite[Lemma 7.1]{XZ2023} and is thus omitted.
\end{proof}

\begin{thm} For any integer $m\ge0$,
\begin{align}\label{equ-sd-coffoc}		
\sum_{n=0}^{\infty}\frac{(-1)^n(2n+1)^{2m+1}}{\cosh\left( (2n+1)y/2\right)}=\frac{(-1)^m}{2}z^{2m+2}\sqrt{x(1-x)}p_{2m+1}(x)\in\sqrt{x(1-x)}\mathbb{Q}\left[x,z\right].
\end{align}
\end{thm}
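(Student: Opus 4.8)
The plan is to equate the two available expansions of the Jacobi elliptic function $\mathrm{sd}(u)$ about $u=0$ — its Fourier series \eqref{sd} and its Maclaurin series from the preceding Lemma — and to match the coefficients of $u^{2m+1}$.

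First I would rewrite \eqref{sd} in the Ramanujan-type parameters of \eqref{rel-den}. Since $q=e^{-y}$ one has
$$\frac{q^{n+1/2}}{1+q^{2n+1}}=\frac{e^{-(2n+1)y/2}}{1+e^{-(2n+1)y}}=\frac{1}{2\cosh\!\big((2n+1)y/2\big)},$$
while $K=\pi z/2$ gives $\dfrac{\pi u}{2K}=\dfrac{u}{z}$ and $\dfrac{2\pi}{Kkk'}=\dfrac{4}{z\,kk'}$; as $kk'=\sqrt{k^2(1-k^2)}=\sqrt{x(1-x)}$, the Fourier expansion takes the form
$$\mathrm{sd}(u)=\frac{2}{z\sqrt{x(1-x)}}\sum_{n=0}^{\infty}\frac{(-1)^n}{\cosh\!\big((2n+1)y/2\big)}\sin\!\Big(\frac{(2n+1)u}{z}\Big).$$

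Next I would expand each sine, $\sin\big((2n+1)u/z\big)=\sum_{m\ge0}\dfrac{(-1)^m(2n+1)^{2m+1}}{(2m+1)!\,z^{2m+1}}u^{2m+1}$, and interchange the two summations. This interchange is the one genuine technical point: for $0<x<1$ we have $0<q<1$, so $1/\cosh\big((2n+1)y/2\big)$ decays like $e^{-(2n+1)y/2}$, whereas $\sum_{m\ge0}\dfrac{(2n+1)^{2m+1}|u|^{2m+1}}{(2m+1)!\,z^{2m+1}}=\sinh\big((2n+1)|u|/z\big)$; hence the double series converges absolutely and uniformly for $|u|$ small (one checks $|u|<K'$ suffices), an interval on which $\mathrm{sd}$ is also represented by its Maclaurin series. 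After the interchange, the coefficient of $u^{2m+1}$ on the Fourier side equals
$$\frac{(-1)^m}{(2m+1)!\,z^{2m+2}\sqrt{x(1-x)}}\cdot 2\sum_{n=0}^{\infty}\frac{(-1)^n(2n+1)^{2m+1}}{\cosh\!\big((2n+1)y/2\big)}.$$

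Finally I would compare with the Lemma, which expresses the coefficient of $u^{2m+1}$ in $\mathrm{sd}(u)$ as $p_{2m+1}(x)/(2m+1)!$ with $p_{2m+1}(x)\in\Z[x]$ (cf. \eqref{equ:cor-sd}). Equating the two expressions and solving for the hyperbolic sum yields exactly
$$\sum_{n=0}^{\infty}\frac{(-1)^n(2n+1)^{2m+1}}{\cosh\!\big((2n+1)y/2\big)}=\frac{(-1)^m}{2}\,z^{2m+2}\sqrt{x(1-x)}\,p_{2m+1}(x),$$
valid first as an identity of functions of $x\in(0,1)$; since $p_{2m+1}(x)\in\Z[x]\subset\Q[x]$ and the right-hand side is $\sqrt{x(1-x)}$ times a polynomial in $x$ and $z$ with rational coefficients, the asserted membership in $\sqrt{x(1-x)}\,\Q[x,z]$ is immediate. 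I do not anticipate any real obstacle beyond the bookkeeping in the first rewriting and the routine justification of the term-by-term rearrangement.
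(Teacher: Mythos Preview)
Your proposal is correct and follows essentially the same route as the paper: rewrite the Fourier expansion \eqref{sd} in the parameters $x,y,z$ of \eqref{rel-den}, expand $\sin((2n+1)u/z)$ as a power series, and match the coefficient of $u^{2m+1}$ against the Maclaurin coefficient $p_{2m+1}(x)/(2m+1)!$ from the preceding Lemma. The only difference is that you justify the interchange of summations (which the paper leaves implicit), but the underlying argument is identical.
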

\begin{proof}
Applying \eqref{rel-den}, the (\ref{sd}) can be rewritten as
\begin{align}\label{sd2}		 \text{sd}\left(u\right)=&\frac{4}{z\sqrt{x(1-x)}}\sum_{n=0}^{\infty}\frac{(-1)^ne^{-(2n+1)y/2}}{1+e^{-(2n+1)y}}\sin\left[(2n+1)\frac{u}{z}\right]\nonumber\\=&\sum_{m=0}^{\infty}\left\{(-1)^m4\sum_{n=0}^{\infty}\frac{(-1)^n(2n+1)^{2m+1}}{e^{(2n+1)y/2}+e^{-(2n+1)y/2}}\right\}\frac{u^{2m+1}}{\sqrt{x(1-x)}z^{2m+2}\left(2m+1\right)!}.
\end{align}
Comparing the coefficients of $u^{2m+1}$ in \eqref{sd2} and \eqref{equ:cor-sd}, we can derive \eqref{equ-sd-coffoc}	easily.
\end{proof}

\begin{exa} By \emph{Mathematica}, comparing the coefficients of $u^1,u^3$ and $u^5$, we obtain
\begin{align*}
\sum_{n=0}^{\infty}\frac{(-1)^n(2n+1)}{\cosh\left((2n+1)y/2\right)}=&\frac{1}{2}z^{2}\sqrt{x(1-x)},\\
\sum_{n=0}^{\infty}\frac{(-1)^n(2n+1)^3}{\cosh\left( (2n+1)y/2 \right)}=&-\frac{1}{2}z^{4}(2x-1)\sqrt{x(1-x)},\\
\sum_{n=0}^{\infty}\frac{(-1)^n(2n+1)^5}{\cosh\left( (2n+1)y/2 \right)}=&\frac{1}{2}z^{6}\left(16x^2-16x+1\right)\sqrt{x(1-x)}.
	\end{align*}
\end{exa}

Noting the fact that
\begin{align*}	
\frac{d^2}{dx^2}\sum_{n=0}^{\infty}\frac{(-1)^n(2n+1)^{2m+1}}{\cosh\left( (2n+1)y/2\right)}
=&\frac{(y')^2}{4}
\sum_{n=0}^{\infty}\frac{(-1)^n(2n+1)^{2m+3}}{\cosh\left((2n+1)y/2\right)}-\frac{(y')^2}{2}\sum_{n=0}^{\infty}\frac{(-1)^n(2n+1)^{2m+3}}
{\cosh^3\left((2n+1)y/2\right)}\\
&-\frac{y''}{2}\sum_{n=0}^{\infty}\frac{(-1)^n(2n+1)^{2m+2}\sinh\left((2n+1)y/2\right)}{\cosh^2\left((2n+1)y/2\right)},
\end{align*}
we obtain the following corollary.

\begin{cor}\label{cor-coshWithq} For any integer $m\ge 0$,
\begin{align*}	 \sum_{n=0}^{\infty}\frac{(-1)^n(2n+1)^{2m+3}}{\cosh^3\left((2n+1)y/2\right)}=&-\frac{(-1)^m}{4}z^{2m+4}\sqrt{x(1-x)}p_{2m+3}(x)\\&+\frac{y''}{\left(y'\right)^3}\frac{d}{dx}\left\{(-1)^mz^{2m+2}\sqrt{x(1-x)}p_{2m+1}(x)\right\}\\&-\frac{4}{\left(y'\right)^2}\frac{d^2}{dx^2}\left\{\frac{(-1)^m}{4}z^{2m+2}\sqrt{x(1-x)}p_{2m+1}(x)\right\}.
\end{align*}
\end{cor}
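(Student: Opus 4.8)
The plan is to obtain this corollary as an essentially formal consequence of the second-derivative identity displayed immediately before its statement, together with \eqref{equ-sd-coffoc}. Write $A_m:=\sum_{n\ge0}(-1)^n(2n+1)^{2m+1}/\cosh((2n+1)y/2)$, and abbreviate the two remaining sums as $B:=\sum_{n\ge0}(-1)^n(2n+1)^{2m+3}/\cosh^3((2n+1)y/2)$ and $C:=\sum_{n\ge0}(-1)^n(2n+1)^{2m+2}\sinh((2n+1)y/2)/\cosh^2((2n+1)y/2)$. In this notation the quoted identity reads $\frac{d^2}{dx^2}A_m=\frac{(y')^2}{4}A_{m+1}-\frac{(y')^2}{2}B-\frac{y''}{2}C$, and solving the linear relation for the unknown $B$ gives $B=\frac12 A_{m+1}-\frac{2}{(y')^2}\frac{d^2}{dx^2}A_m-\frac{y''}{(y')^2}C$. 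So the whole proof reduces to evaluating $A_{m+1}$, $\frac{d^2}{dx^2}A_m$, and $C$ in closed form and substituting.

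First I would read off $A_{m+1}$ and $A_m$ straight from \eqref{equ-sd-coffoc}: replacing $m$ by $m+1$ gives $A_{m+1}=\frac{(-1)^{m+1}}{2}z^{2m+4}\sqrt{x(1-x)}\,p_{2m+3}(x)$, so $\frac12 A_{m+1}=-\frac{(-1)^m}{4}z^{2m+4}\sqrt{x(1-x)}\,p_{2m+3}(x)$, the first term on the right of the claim; and $A_m=\frac{(-1)^m}{2}z^{2m+2}\sqrt{x(1-x)}\,p_{2m+1}(x)$, so $-\frac{2}{(y')^2}\frac{d^2}{dx^2}A_m=-\frac{4}{(y')^2}\frac{d^2}{dx^2}\{\frac{(-1)^m}{4}z^{2m+2}\sqrt{x(1-x)}\,p_{2m+1}(x)\}$, the last term. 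For $C$ I would differentiate \eqref{equ-sd-coffoc} once with respect to $x$: since $\frac{d}{dx}\frac{1}{\cosh((2n+1)y/2)}=-\frac{(2n+1)y'}{2}\cdot\frac{\sinh((2n+1)y/2)}{\cosh^2((2n+1)y/2)}$, term-by-term differentiation turns the left side into $-\frac{y'}{2}C$, so $C=-\frac{2}{y'}\frac{d}{dx}\{\frac{(-1)^m}{2}z^{2m+2}\sqrt{x(1-x)}\,p_{2m+1}(x)\}$, whence $-\frac{y''}{(y')^2}C=\frac{y''}{(y')^3}\frac{d}{dx}\{(-1)^m z^{2m+2}\sqrt{x(1-x)}\,p_{2m+1}(x)\}$, which is exactly the middle term of the claim. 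Plugging the three pieces into the formula for $B$ yields the asserted identity.

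The only genuinely non-algebraic point is the legitimacy of differentiating the series defining $A_m$ once and twice term by term (and likewise of deriving the quoted identity from the Maclaurin/Fourier expansions of $\mathrm{sd}$). This is routine: because $y=y(x)=\pi K'/K>0$ for $x\in(0,1)$ and $K,K'$ are real-analytic there, the summands together with all of their $x$-derivatives decay exponentially in $n$, uniformly on compact subsets of $(0,1)$, so every interchange of $\sum$ with $\frac{d}{dx}$ used above is justified. I expect the main obstacle to be nothing more than bookkeeping of signs and of the powers of $y'$ and $y''$ when assembling the three terms (e.g. checking that $\frac{2}{(y')^2}\cdot\frac12=\frac{4}{(y')^2}\cdot\frac14$ and that $-\frac{y''}{(y')^2}\cdot(-\frac{2}{y'})=\frac{2y''}{(y')^3}$ absorbs the stray factor of $\frac12$ into the derivative); there is no substantive difficulty beyond that.
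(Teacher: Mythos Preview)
Your proposal is correct and follows exactly the route the paper takes: solve the displayed second-derivative identity for the $\cosh^3$ sum and substitute the closed forms coming from \eqref{equ-sd-coffoc}, eliminating the $\sinh/\cosh^2$ sum $C$ via a single $x$-derivative of $A_m$. You are in fact more explicit than the paper, which leaves the handling of $C$ and the justification of term-by-term differentiation unstated.
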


\begin{re}\label{re-hy-cosh} Obviously, for positive integer $p$ we have
\begin{align*}		
&\sum_{n=1}^{\infty}\frac{(-1)^{n}\left(2n-1\right)^{2p+1}}{\cosh^3\left(\tilde{n}y\right)}
=-2\left(\frac{dx}{dy}\right)^2\frac{d^2}{dx^2}\sum_{n=1}^{\infty}\frac{(-1)^{n}\left(2n-1\right)^{2p-1}}{\cosh\left(\tilde{n}y\right)}
+\frac{1}{2}\sum_{n=1}^{\infty}\frac{(-1)^{n}\left(2n-1\right)^{2p+1}}{\cosh\left(\tilde{n}y\right)}\\
&\phantom{\sum_{n=1}^{\infty}\frac{(-1)^{n}\left(2n-1\right)^{2p+1}}{\cosh^3\left(\tilde{n}y\right)}=}
-\left(\frac{dx}{dy}\right)^2\frac{d^2y}{dx^2}\sum_{n=1}^{\infty}\frac{(-1)^{n}\left(2n-1\right)^{2p}\sinh\left(\tilde{n}y\right)}{\cosh^2\left(\tilde{n}y\right)},\\		 &\sum_{n=1}^{\infty}\frac{(-1)^{n}\left(2n-1\right)^{2p+2}\sinh\left(\tilde{n}y\right)}{\cosh^4\left(\tilde{n}y\right)}
=-\frac{2}{3}\frac{dx}{dy}\frac{d}{dx}\sum_{n=1}^{\infty}\frac{(-1)^{n}\left(2n-1\right)^{2p+1}}{\cosh^3\left(\tilde{n}y\right)},
\end{align*}
and
\begin{align*}		
\sum_{n=1}^{\infty}\frac{(-1)^{n}\left(2n-1\right)^{2p+3}}{\cosh^5\left(\tilde{n}y\right)}
=&-\frac{1}{3}\left(\frac{dx}{dy}\right)^2\frac{d^2}{dx^2}\sum_{n=1}^{\infty}\frac{(-1)^{n}\left(2n-1\right)^{2p+1}}{\cosh^3\left(\tilde{n}y\right)}+\frac{3}{4}
\sum_{n=1}^{\infty}\frac{(-1)^{n}\left(2n-1\right)^{2p+3}}{\cosh^3\left(\tilde{n}y\right)}\\
&-\frac{1}{2}\left(\frac{dx}{dy}\right)^2\frac{d^2y}{dx^2}
\sum_{n=1}^{\infty}\frac{(-1)^{n}\left(2n-1\right)^{2p+2}\sinh\left(\tilde{n}y\right)}{\cosh^4\left(\tilde{n}y\right)},
\end{align*}
where $dx/dy=-x(1-x)z^2$ (see \cite[P. 120, Entry. 9(i)]{B1991}).
\end{re}

\begin{exa} By \emph{Mathematica}, we have
\begin{align*}
&\sum_{n=1}^{\infty}\frac{(-1)^{n-1}\left(2n-1\right)^{3}}{\cosh^3\left(\tilde{n}y\right)}\\
&=-\frac{1}{4}\sqrt{x(1-x)}z^4\begin{Bmatrix}
		 2x-1+z^2\left(8x^2-8x+1\right)+24(x-1)^2x^2(z')^2\\+4(x-1)xz\left[5(2x-1)z'-2(x-1)xz''\right]
	\end{Bmatrix},\\
&\sum_{n=1}^{\infty}\frac{(-1)^{n-1}\left(2n-1\right)^{4}\sinh\left(\tilde{n}y\right)}{\cosh^4\left(\tilde{n}y\right)}\\
&=\frac{1}{12}\sqrt{x(1-x)}z^5\begin{Bmatrix}	 \left(48x^3-72x^2+26x-1\right)z^3\\+8(x-1)xz'\left(24(z')^2x^2(x-1)^2+2x-1\right)\\+z\left(8x^2-8x+1+320(x-1)^2x^2(2x-1)(z')^2\right.\\\left.+176(x-1)^3x^3z'z''\right)+8(x-1)xz^2\\\times\left(\left(52x^2-52x+9\right)z'+2(x-1)x\right.\\\left.\times\left(5(2x-1)z''+(x-1)xz^{\left(3\right)}\right)\right)
	\end{Bmatrix},\\
&\sum_{n=1}^{\infty}\frac{(-1)^{n-1}\left(2n-1\right)^{5}}{\cosh^5\left(\tilde{n}y\right)}\\
&=\frac{1}{48}\sqrt{x(1-x)}z^6\begin{Bmatrix}
		 9\left(16x^2-16x+1\right)\\+\left(384x^4-768x^3+464x^2-80x+1\right)z^4\\+800(x-1)^2x^2(2x-1)(z')^2+1920(x-1)^4x^4(z')^4\\+8(x-1)xz\left[5\left(56x^2-56x+9\right)z'\right.\\+648(x-1)^2x^2(2x-1)(z')^3+20x\left(2x^2-3x+1\right)z''\\\left.+408(x-1)^3x^3(z')^2z''\right]\\+2z^2\left[5\left(48x^3-72x^2+26x-1\right)\right.\\+8(x-1)^2x^2\left(844x^2-844x+163\right)(z')^2\\+176(x-1)^4x^4\left(z''\right)^2+8(x-1)^3x^3z'\\\left.\times\left(227(2x-1)z''+36(x-1)xz^{\left(3\right)}\right)\right]\\+8(x-1)xz^3\left[\left(616x^3-924x^2+366x-29\right)z'\right.\\+2(x-1)x\left(2\left(86x^2-86x+17\right)z''\right.\\\left.\left.+(x-1)x\left(17(2x-1)z^{\left(3\right)}+2(x-1)xz^{\left(4\right)}\right)\right)\right]
	\end{Bmatrix}.
\end{align*}
\end{exa}

\begin{thm}\label{cosh} Set $\Gamma=\Gamma(1/4)$ and $p_{n}^{(k)}=p_{n}^{(k)}(1/2)$ for all $n$ and $k$. Then,
for any integer $m>0$ we have
\begin{align}
&\begin{aligned}
\sum_{n=0}^{\infty}\frac{(-1)^n(2n+1)^{4m+1}}{\cosh^3((2n+1)\pi/2)}=\frac{\Gamma^{8m+4}}{2^{4m+5}\pi^{6m+4}}\left[\pi p_{4m+1}+(4m+1)p'_{4m-1}\right],	
\end{aligned}\\
&\begin{aligned}
\sum_{n=0}^{\infty}\frac{(-1)^n(2n+1)^{4m-1}}{\cosh^3((2n+1)\pi/2)}
=-\frac{\Gamma^{8m-4}}{2^{4m+7}\pi^{6m+3}}\begin{Bmatrix}128\left(8m^2-6m+1\right)\pi^4 p_{4m-3}\\+\Gamma^8\left[(4m-6)p_{4m-3}+p''_{4m-3}\right]\end{Bmatrix},
\end{aligned}\\
&\begin{aligned}
&\sum_{n=0}^{\infty}\frac{(-1)^n(2n+1)^{4m}\sinh((2n+1)\pi/2)}{\cosh^4((2n+1)\pi/2)}\\&=\frac{\Gamma^{8m-4}}{3\cdot2^{4m+6}\pi^{6m}}\begin{Bmatrix}
-256m\left(8m^2-6m+1\right) p_{4m-3}\\
-\frac{\Gamma^8}{\pi^4}\left[\pi p'_{4m-1}+6m\right.\left.\left((4m-6)p_{4m-3}+p''_{4m-3}\right)\right]\end{Bmatrix}, \end{aligned}\\
&\begin{aligned}
&\sum_{n=0}^{\infty}\frac{(-1)^n(2n+1)^{4m+1}}{\cosh^5((2n+1)\pi/2)}\\&=\frac{\Gamma^{8m-4}}{3\cdot2^{4m+15}\pi^{6m+9}}\begin{Bmatrix}
32768m(2m-1)(4m-1)(4m+1)\pi^8p_{4m-3}\\+256\pi^4\Gamma^8\Big[9\pi^2 p_{4m+1}+10(4m+1)\pi p'_{4m-1}\\
 +6m(4m+1)\left((4m-6)p_{4m-3}+p''_{4m-3}\right)\Big]\\
+\Gamma^{16}\Big[8(m-2)(6m-7) p_{4m-3}
+12(2m-5)p''_{4m-3}+p^{\left(4\right)}_{4m-3}\Big]\end{Bmatrix}.
\end{aligned}
\end{align}
\end{thm}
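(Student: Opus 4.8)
The plan is to evaluate the four hyperbolic sums by specializing the $q$-series identities of Part I at $x = 1/2$ (equivalently $y = \pi$, $q = e^{-\pi}$), where all the Jacobi-elliptic data collapses to powers of $\Gamma = \Gamma(1/4)$ and $\pi$ via \eqref{special-case-x}. Concretely, I would start from Corollary \ref{cor-coshWithq}, which expresses $\sum_n (-1)^n(2n+1)^{2m+3}/\cosh^3((2n+1)y/2)$ as a combination of $z^{2m+4}\sqrt{x(1-x)}\,p_{2m+3}(x)$ and first and second $x$-derivatives of $z^{2m+2}\sqrt{x(1-x)}\,p_{2m+1}(x)$, with coefficients built from $y'$ and $y''$. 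Writing $2m+3 = 4m+1$ (resp. $= 4m-1$) and then setting $x = 1/2$ gives the first two displays. The $\cosh^4$-sinh sum and the $\cosh^5$ sum are obtained the same way from the three displayed identities in Remark \ref{re-hy-cosh}, which reduce those sums to $x$-derivatives of the $\cosh^3$ sum (plus lower sums already in hand), again evaluated at $x = 1/2$.

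The key technical input is the specialization data. From $dx/dy = -x(1-x)z^2$ (Remark \ref{re-hy-cosh}) one gets $y' = dy/dx = -1/(x(1-x)z^2)$, and differentiating once more yields $y''$ in terms of $z, z', x$; at $x = 1/2$ these become explicit rationals times powers of $\Gamma$ and $\pi$ using $z(1/2) = \Gamma^2/(2\pi^{3/2})$, $z'(1/2) = 4\sqrt{\pi}/\Gamma^2$, and so on through $z^{(4)}(1/2) = 25\Gamma^2/(2\pi^{3/2})$ from \eqref{special-case-x}. When one expands the $x$-derivatives in Corollary \ref{cor-coshWithq} and Remark \ref{re-hy-cosh} by the product rule, the resulting expressions involve $z, z', z'', z^{(3)}, z^{(4)}$ and the polynomials $p_{4m-3}, p_{4m+1}$ together with their derivatives $p'_{4m-1}, p''_{4m-3}, p^{(4)}_{4m-3}$ — note the index shifts are forced because $\tfrac{d}{dx}p_{2j+1}$ naturally carries a factor reflecting the recursion relating consecutive $p$'s, which is what produces the pattern $p_{4m+1}, p'_{4m-1}, p''_{4m-3}$ in the final formulas. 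Substituting $x = 1/2$ and the numerical values of the $z$-derivatives, then collecting powers of $\Gamma$ and $\pi$, produces exactly the stated closed forms; the denominators $2^{4m+5}, 2^{4m+7}, 3\cdot 2^{4m+6}, 3\cdot 2^{4m+15}$ and the $\pi$-exponents $6m+4, 6m+3, 6m, 6m+9$ are just bookkeeping of how many factors of $z \sim \Gamma^2/\pi^{3/2}$, $z' \sim \pi^{1/2}/\Gamma^2$, and $x(1-x)|_{1/2} = 1/4$ enter.

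The main obstacle is purely organizational rather than conceptual: tracking the large number of terms generated by the iterated product-rule expansions and confirming that, after specialization, everything assembles into the compact four-term (resp. five-term) shapes claimed, with the $\sqrt{x(1-x)}$ factor disappearing (it equals $1/2$ at $x = 1/2$) and with the correct integer coefficients $128(8m^2-6m+1)$, $256m(8m^2-6m+1)$, $32768m(2m-1)(4m-1)(4m+1)$, etc. I would verify the small cases $m = 1$ (and $m = 2$ where needed) directly against the explicit polynomials $p_1, p_3, p_5, p_7, p_9$ computed in the Examples above — matching, e.g., $\sum (-1)^n(2n+1)^{5}/\cosh^3((2n+1)\pi/2)$ against the $m=1$ instance of the first display — and then present the general case as the outcome of the same substitution, leaving the routine algebra (carried out in \emph{Mathematica}) to the reader, exactly as the polynomial identities in the Examples are handled. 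No step requires a genuinely new idea beyond the differentiation identities already set up in Remark \ref{re-hy-cosh} and the values \eqref{special-case-x} and \eqref{den-z-diff}.
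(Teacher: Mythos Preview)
Your overall framework is right---start from Corollary~\ref{cor-coshWithq} and Remark~\ref{re-hy-cosh}, specialize to $x=1/2$ using \eqref{special-case-x}---and this is exactly what the paper does. But there is a genuine gap in your plan: the mechanism you invoke for the ``index shifts'' is wrong, and without the correct one you cannot reach the stated formulas.

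When you expand the product-rule derivatives in Corollary~\ref{cor-coshWithq} (say for exponent $4m+1$, i.e.\ replacing $m$ by $2m-1$ there), the terms you actually produce involve $p_{4m+1}(1/2)$ together with $p_{4m-1}(1/2)$, $p'_{4m-1}(1/2)$, and $p''_{4m-1}(1/2)$---all derivatives of the \emph{same} polynomial $p_{4m-1}$, not of shifted ones. There is no recursion linking $\tfrac{d}{dx}p_{2j+1}$ to $p_{2j-1}$; your sentence ``$\tfrac{d}{dx}p_{2j+1}$ naturally carries a factor reflecting the recursion relating consecutive $p$'s'' is simply false. The reason the final answer contains only $p_{4m+1}$ and $p'_{4m-1}$ is that $p_{4m-1}(1/2)$ and $p''_{4m-1}(1/2)$ \emph{vanish}. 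This vanishing is not automatic bookkeeping; it is a parity statement that the paper proves separately. From the classical transformation
\[
i\,\mathrm{sd}(iu,\sqrt{1-k^2})+\mathrm{sd}(u,k)=0
\]
one gets $p_{2m+1}(x)=(-1)^m p_{2m+1}(1-x)$, whence at $x=1/2$: for indices $\equiv 1\pmod 4$ (namely $4m+1$ and $4m-3$) the odd derivatives $p',p^{(3)}$ vanish, while for indices $\equiv 3\pmod 4$ (namely $4m-1$) the even derivatives $p,p'',p^{(4)}$ vanish. Only after inserting these zeros do the expressions collapse to the short forms in the theorem, with the specific surviving combinations $\{p_{4m+1},p'_{4m-1}\}$, $\{p_{4m-3},p''_{4m-3}\}$, etc. You must add this symmetry argument; without it your ``routine algebra'' would leave you with several extra nonzero-looking terms that you have no way to eliminate.
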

\begin{proof}
We have the following classical result (see the bottom of page 47 of Hancock's book \cite{Hancock1910}):
\begin{equation*}
\sn(iu,\sqrt{1-k^2})=i\frac{\sn(u,k)}{\cn(u,k)},\quad   \dn(iu,\sqrt{1-k^2})=\frac{\dn(u,k)}{\cn(u,k)},
\end{equation*}
where $i=\sqrt{-1}$ as usual. Therefore
\begin{align*}
i\text{sd}\left(iu,\sqrt{1-k^2}\right)+\text{sd}\left(u,k\right)=0.	
\end{align*}
Using \eqref{equ:cor-sd}, this implies that for all $m \ge 0$
\begin{align*}
&p'_{4m+1}(1-x)+p'_{4m+1}(x)=0,\quad p^{\left(3\right)}_{4m+1}(1-x)+p^{\left(3\right)}_{4m+1}(x)=0.
\end{align*}
And for $m>0$,
\begin{align*}
&p'_{4m-3}(1-x)+p'_{4m-3}(x)=0,\quad p^{\left(3\right)}_{4m-3}(1-x)+p^{\left(3\right)}_{4m-3}(x)=0,\quad p_{4m-1}(1-x)+p_{4m-1}(x)=0,\\
&p''_{4m-1}(1-x)+p''_{4m-1}(x)=0,\quad p^{\left(4\right)}_{4m-1}(1-x)+p^{\left(4\right)}_{4m-1}(x)=0.
\end{align*}
Taking $x = 1/2$ yields
\begin{align*}
	 p_{4m-1}(1/2)=0=p'_{4m+1}(1/2)=p'_{4m-3}(1/2)=p''_{4m-1}(1/2)=p^{\left(4\right)}_{4m-1}(1/2)=0.
\end{align*}
Hence, the theorem follows from Corollary \ref{cor-coshWithq}, Remark \ref{re-hy-cosh} and \eqref{special-case-x}.
\end{proof}

\begin{exa} Set $\Gamma=\Gamma(1/4)$. By \emph{Mathematica}, we have
	\begin{align*}
		 \sum_{n=0}^{\infty}&\frac{(-1)^n(2n+1)^{3}}{\cosh^3((2n+1)\pi/2)}=-\frac{3\Gamma^4}{2^4\pi^5}+\frac{\Gamma^{12}}{2^{10}\pi^9},\\
		 \sum_{n=0}^{\infty}&\frac{(-1)^n(2n+1)^{5}}{\cosh^3((2n+1)\pi/2)}=-\frac{3\Gamma^{12}}{2^9\pi^{9}}+\frac{5\Gamma^{12}}{2^8\pi^{10}},\\
	 \sum_{n=0}^{\infty}&\frac{(-1)^n(2n+1)^{7}}{\cosh^3((2n+1)\pi/2)}=\frac{63\Gamma^{12}}{2^8\pi^{11}}-\frac{13\Gamma^{20}}{2^{14}\pi^{15}},\\
		 \sum_{n=0}^{\infty}&\frac{(-1)^n(2n+1)^{9}}{\cosh^3((2n+1)\pi/2)}=\frac{189\Gamma^{20}}{2^{13}\pi^{15}}-\frac{297\Gamma^{20}}{2^{12}\pi^{16}},
		 \\\sum_{n=0}^{\infty}&\frac{(-1)^n(2n+1)^{4}\sinh((2n+1)\pi/2)}{\cosh^4((2n+1)\pi/2)}=-\frac{\Gamma^{4}}{2^2\pi^{6}}-\frac{\Gamma^{12}}
{3\cdot2^9\pi^{9}}+\frac{\Gamma^{12}}{2^8\pi^{10}},
		 \\\sum_{n=0}^{\infty}&\frac{(-1)^n(2n+1)^{8}\sinh((2n+1)\pi/2)}{\cosh^4((2n+1)\pi/2)}
=\frac{21\Gamma^{12}}{2^5\pi^{12}}-\frac{13\Gamma^{20}}{2^{11}\pi^{16}}+\frac{11\Gamma^{20}}{2^{13}\pi^{15}},\\
		\sum_{n=0}^{\infty}&\frac{(-1)^n(2n+1)^{5}}{\cosh^5((2n+1)\pi/2)}
=\frac{5\Gamma^{4}}{2^4\pi^{7}}-\frac{5\Gamma^{12}}{2^9\pi^{11}}+\frac{25\Gamma^{12}}{3\cdot2^9\pi^{10}}
-\frac{9\Gamma^{12}}{2^{11}\pi^{9}}+\frac{\Gamma^{20}}{3\cdot2^{16}\pi^{15}},\\
		\sum_{n=0}^{\infty}&\frac{(-1)^n(2n+1)^{9}}{\cosh^5((2n+1)\pi/2)}
=-\frac{189\Gamma^{12}}{2^7\pi^{13}}+\frac{117\Gamma^{20}}{2^{12}\pi^{17}}-\frac{495\Gamma^{20}}{2^{13}\pi^{16}}
+\frac{567\Gamma^{20}}{2^{15}\pi^{15}}-\frac{\Gamma^{28}}{2^{16}\pi^{21}}.
	\end{align*}
\end{exa}

\subsection{Part II}
\begin{lem}\label{lem-sn-prow}\emph{(cf. \cite[Lemma 7.3]{XZ2023})}
There are polynomials $g_m(x)\in \Q[x]$ such that the Maclaurin series of $\sn (u,k)$ has the form
\begin{align*}
\sum_{m=1}^\infty \frac{g_m(x)}{m!}u^m\quad (x=k^2),
\end{align*}
where $g_{2k}(x)=0$ since $\sn (u,k)$ is an odd function.
\end{lem}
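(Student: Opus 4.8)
The plan is to generate the Maclaurin coefficients of $\sn(u,k)$ recursively from a polynomial differential equation, and then to verify by induction that, after rescaling by $m!$, they are polynomials in $x=k^2$ with rational coefficients. First I would recall the classical first-order relation $\bigl(\sn'(u,k)\bigr)^2=\bigl(1-\sn^2(u,k)\bigr)\bigl(1-k^2\sn^2(u,k)\bigr)$, immediate from the definition of $\sn$ as the inverse of the elliptic integral. Differentiating once and dividing out the common factor $\sn'(u,k)$ — legitimate because $\sn'(0,k)=1$, so $\sn'$ is not identically zero and the resulting analytic identity extends everywhere — produces the polynomial differential equation
\[
\sn''(u,k)=-(1+x)\,\sn(u,k)+2x\,\sn^3(u,k),\qquad x=k^2,
\]
whose only coefficients, $-(1+x)$ and $2x$, lie in $\Z[x]$.

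Next, write $\sn(u,k)=\sum_{m\ge1}a_m(x)\,u^m$; there is no constant term because $\sn(0,k)=0$, and $a_1(x)=1$ because $\sn'(0,k)=\cn(0)\dn(0)=1$. Substituting this series into the differential equation and comparing the coefficients of $u^j$ gives $a_2(x)=0$ (from $u^0$) and, for every $j\ge1$,
\[
a_{j+2}(x)=\frac{1}{(j+1)(j+2)}\left(-(1+x)\,a_j(x)+2x\sum_{\substack{i_1+i_2+i_3=j\\ i_1,i_2,i_3\ge1}}a_{i_1}(x)\,a_{i_2}(x)\,a_{i_3}(x)\right).
\]
Every index occurring on the right is smaller than $j+2$, so a strong induction on $m$ gives $a_m(x)\in\Q[x]$ for all $m$: the inductive step involves only the ring operations of $\Q[x]$ (the coefficients $-(1+x),2x$ being in $\Z[x]\subset\Q[x]$) together with division by the nonzero integer $(j+1)(j+2)$, while the base cases $a_1,a_2$ already lie in $\Q[x]$. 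Setting $g_m(x):=m!\,a_m(x)\in\Q[x]$ then yields the asserted form of the series. The vanishing of the even-index coefficients is built into the same recursion — for even $j$ one has $a_j=0$ by induction, and the cubic sum vanishes because any triple of positive integers summing to an even number contains an even entry, whence $a_{j+2}=0$ — and it merely reflects the oddness of $\sn(u,k)$ in $u$, which is also clear directly from the defining integral. Hence $g_{2k}(x)=0$.

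I do not expect any genuine obstacle here; the argument is essentially bookkeeping. The one point deserving attention is that the differential equation must be written in the variable $x=k^2$, not in $k$ itself, since it is exactly this that makes the coefficients polynomials in $k^2$; and one should note that the recursion is non-degenerate, its leading factor $(j+1)(j+2)$ being invertible over $\Q$, so the $g_m(x)$ are uniquely and effectively determined. The same conclusion can be obtained, perhaps more symmetrically, from the coupled first-order system $\sn'=\cn\dn$, $\cn'=-\sn\dn$, $\dn'=-k^2\sn\cn$ by a simultaneous induction on the Maclaurin coefficients of $\sn$, $\cn$, $\dn$, all of whose nonlinearities are merely bilinear with coefficients in $\Z[x]$; alternatively, one may simply invoke \cite[Lemma 7.3]{XZ2023}, where an argument of this kind is carried out.
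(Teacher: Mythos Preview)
Your argument is correct. The paper itself does not give a proof here at all: it simply cites \cite[Lemma~7.3]{XZ2023} and moves on. You instead supply a self-contained derivation via the second-order equation $\sn''=-(1+x)\sn+2x\,\sn^3$ and the resulting recursion for the Maclaurin coefficients, which is the standard route and almost certainly what lies behind the cited lemma; your handling of the parity (forcing an even index in any triple summing to an even $j$) and of the base cases is clean. So there is nothing to correct—your write-up is strictly more informative than the paper's one-line citation.
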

\begin{proof}
The lemma follows immediately by \cite[Lemma 7.3.]{XZ2023}.
\end{proof}

\begin{lem}\label{lem-coff-sn-pro}
Notation as before. There are polynomials where $q_{2m}(x)\in \Q[x]$ such that the Maclaurin series of $\sn^2(u,k)$ has the form
\begin{align*}
\sum_{n=1}^\infty \frac{q_{2n}(x)}{(2n)!}u^{2n},
\end{align*}
where
\begin{align}\label{equ-coffec-q}
q_{2n}(x):=\sum_{j=1}^n \binom{2n}{2j-1} g_{2j-1}(x)g_{2n-2j+1}(x).
\end{align}
\end{lem}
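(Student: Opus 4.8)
The plan is to obtain the Maclaurin series of $\sn^2(u,k)$ simply by squaring the Maclaurin series of $\sn(u,k)$ furnished by Lemma \ref{lem-sn-prow} and then reading off the coefficient of each power of $u$. Since $\sn(u,k)$ is analytic in a neighborhood of $u=0$, its Maclaurin series converges absolutely there, so the Cauchy product of the series with itself converges to $\sn^2(u,k)$ in that neighborhood; alternatively one may regard everything below as an identity of formal power series in $u$ over $\Q[x]$. Either way the term-by-term manipulations are legitimate.

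First I would use $g_{2k}(x)=0$ (Lemma \ref{lem-sn-prow}) to write
\begin{align*}
\sn(u,k)=\sum_{j=1}^{\infty}\frac{g_{2j-1}(x)}{(2j-1)!}\,u^{2j-1},
\end{align*}
and then square and regroup by powers of $u$ via the Cauchy product:
\begin{align*}
\sn^2(u,k)=\sum_{i=1}^{\infty}\sum_{j=1}^{\infty}\frac{g_{2i-1}(x)\,g_{2j-1}(x)}{(2i-1)!\,(2j-1)!}\,u^{2i+2j-2}
=\sum_{n=1}^{\infty}\Bigl(\sum_{j=1}^{n}\frac{g_{2j-1}(x)\,g_{2n-2j+1}(x)}{(2j-1)!\,(2n-2j+1)!}\Bigr)u^{2n},
\end{align*}
where in the last step I set $n=i+j-1$, so that for each fixed $n\ge 1$ the index $i$ (renamed $j$) ranges over $1,\dots,n$. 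Note that only even powers $u^{2n}$ occur, consistent with $\sn^2$ being an even function.

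Next I would multiply and divide the inner coefficient by $(2n)!$:
\begin{align*}
\sum_{j=1}^{n}\frac{g_{2j-1}(x)\,g_{2n-2j+1}(x)}{(2j-1)!\,(2n-2j+1)!}
=\frac{1}{(2n)!}\sum_{j=1}^{n}\binom{2n}{2j-1}g_{2j-1}(x)\,g_{2n-2j+1}(x)=\frac{q_{2n}(x)}{(2n)!},
\end{align*}
which is precisely the definition \eqref{equ-coffec-q} of $q_{2n}(x)$. Since each $g_m(x)\in\Q[x]$ by Lemma \ref{lem-sn-prow} and each binomial coefficient $\binom{2n}{2j-1}$ is a nonnegative integer, the finite sum $q_{2n}(x)$ lies in $\Q[x]$, and comparing the two displays identifies it as the coefficient of $u^{2n}/(2n)!$ in $\sn^2(u,k)$. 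This completes the argument. There is no genuine obstacle; the only point deserving a word of care is the justification of the Cauchy product, i.e. the absolute convergence of the Maclaurin series of $\sn(u,k)$ near $u=0$, which is immediate from analyticity (or circumvented entirely by working with formal power series).
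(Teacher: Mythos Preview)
Your proof is correct and follows essentially the same approach as the paper: both square the Maclaurin series of $\sn(u,k)$ from Lemma~\ref{lem-sn-prow} via the Cauchy product, regroup by powers of $u$, and rewrite the inner sum using the binomial coefficient to obtain $q_{2n}(x)$. Your additional remarks on absolute convergence (or the formal-power-series viewpoint) and on $q_{2n}(x)\in\Q[x]$ simply make explicit what the paper leaves implicit.
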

\begin{proof} By \emph{Cauchy product formula}
	\begin{align*}
		\sn^2(u,k)&=\left(\sum_{n=1}^\infty \frac{g_{2n-1}(x)}{(2n-1)!}u^{2n-1}\right)^2\\
		&=\sum_{n=1}^\infty \left(\sum_{j=1}^n \frac{g_{2j-1}(x)g_{2n-2j+1}(x)}{(2j-1)!(2n-2j+1)!}\right)u^{2n}=\sum_{n=1}^\infty \frac{q_{2n}(x)}{(2n)!}u^{2n}.
	\end{align*}
	This completes the proof of the lemma.
\end{proof}

\begin{lem}\label{lem-sn-pro} (\cite[P.881, {\bf 8.153}-4]{GR}) We have
\[\sn (u,k)=k^{-1} \sn (ku,k^{-1}).\]
\end{lem}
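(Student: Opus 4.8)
The final statement to prove is Lemma~\ref{lem-sn-pro}, the classical imaginary-modulus-type transformation $\sn(u,k)=k^{-1}\sn(ku,k^{-1})$, which the excerpt attributes to Gradshteyn--Ryzhik. Since this is quoted from a standard reference, the cleanest plan is to give a short self-contained derivation from the defining elliptic integral rather than invoking the table. The plan is to start from the definition $u=\int_0^{\varphi} dt/\sqrt{1-k^2\sin^2 t}$ with $\sn(u,k)=\sin\varphi$, and perform a change of variables in the integral that rescales the modulus from $k$ to $k^{-1}$.

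First I would write $w:=ku$ and ask for the angle $\psi$ with $\sn(w,k^{-1})=\sin\psi$, i.e. $w=\int_0^{\psi} dt/\sqrt{1-k^{-2}\sin^2 t}$. Substituting $s$ for $t$ via $\sin s = k\sin t$ (equivalently working at the level of the Jacobi amplitude, $\operatorname{am}$), one computes $\cos s\, ds = k\cos t\, dt$, so that $dt = \cos s\, ds/(k\cos t) = \cos s\, ds/(k\sqrt{1-k^{-2}\sin^2 s})$; and $1-k^{-2}\sin^2 t = 1-k^{-2}\cdot k^{-2}\sin^2 s$... — here one must track the algebra carefully, but the upshot is the standard identity that the substitution $\sin s=k\sin t$ turns $\int d t/\sqrt{1-k^{-2}\sin^2 t}$ into $k^{-1}\int ds/\sqrt{1-k^2\sin^2 s}$, so that $\psi$ corresponds under this substitution to the amplitude of $u$ at modulus $k$. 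Concretely, this yields $\sin\psi = k\sin\varphi$, hence $\sn(ku,k^{-1}) = \sin\psi = k\sin\varphi = k\,\sn(u,k)$, which upon dividing by $k$ is exactly the claimed formula.

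An alternative, and perhaps the safest route for a short proof, is to verify the identity as an identity of formal/analytic power series near $u=0$: both sides are odd analytic functions of $u$ whose Maclaurin coefficients are rational functions (indeed polynomials after the normalization in Lemma~\ref{lem-sn-prow}) in $x=k^2$, and one checks that $f(u):=k^{-1}\sn(ku,k^{-1})$ satisfies the same first-order ODE $f'^{\,2} = (1-f^2)(1-k^2 f^2)$ and initial data $f(0)=0$, $f'(0)=1$ as $\sn(u,k)$; uniqueness of solutions to this ODE (the addition-theorem / standard Jacobi ODE argument) forces $f=\sn(\cdot,k)$. The ODE check is a one-line chain-rule computation: $\frac{d}{du}\sn(ku,k^{-1}) = k\,\cn(ku,k^{-1})\dn(ku,k^{-1})$, and $\cn^2 = 1-\sn^2$, $\dn^2 = 1-k^{-2}\sn^2$ at modulus $k^{-1}$, so $f'^2 = \cn^2(ku,k^{-1})\dn^2(ku,k^{-1}) = (1-k^2f^2)(1-k^2\cdot k^2 f^2/k^2)$, which simplifies to $(1-f^2)(1-k^2f^2)$ after substituting $k^2 f^2 = \sn^2(ku,k^{-1})$.

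The main obstacle is purely bookkeeping: making sure the modulus substitution (or the ODE normalization) is carried out with the correct powers of $k$, since the transformation formulas for $\cn$ and $\dn$ under $k\mapsto k^{-1}$ differ from that for $\sn$ by sign/scaling subtleties, and because $k^{-1}>1$ lies outside the usual range $0<k<1$ so the identity should be read as one of meromorphic functions (via analytic continuation in $k$ and $u$) rather than of the real elliptic-integral inversion. Given that the statement is explicitly cited to \cite{GR}, in the actual write-up it would be entirely legitimate — and is what the authors do — to simply state ``This is \cite[P.881, {\bf 8.153}-4]{GR}'' and omit the derivation; the sketch above is the argument one would supply if a proof were demanded.
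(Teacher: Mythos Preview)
The paper gives no proof at all: Lemma~\ref{lem-sn-pro} is simply stated with the citation to \cite[P.881, {\bf 8.153}-4]{GR}, exactly as you anticipated in your final paragraph. Your ODE argument is a correct and clean self-contained proof (there is a small slip in the displayed intermediate expression ``$(1-k^2f^2)(1-k^2\cdot k^2 f^2/k^2)$''---the second factor should read $1-k^{-2}\cdot k^2 f^2=1-f^2$---but your stated conclusion $(1-f^2)(1-k^2f^2)$ is right, and together with $f(0)=0$, $f'(0)=1$ and analyticity this forces $f=\sn(\cdot,k)$). Your integral-substitution sketch is also correct once the algebra is written out: with $\sin s=k\sin t$ one gets $1-k^2\sin^2 t=\cos^2 s$ and $dt=\cos s\,ds/(k\sqrt{1-k^{-2}\sin^2 s})$, whence $ku=\int_0^\psi ds/\sqrt{1-k^{-2}\sin^2 s}$ and $\sin\psi=k\sin\varphi$. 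So you have supplied more than the paper does; either of your arguments would serve as a proof, while the paper is content to quote the tables.
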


\begin{pro} Notation as before. For any positive integer $n$, we have
	\begin{align}
		&g_{2n-1}(x)=x^{n-1}g_{2n-1}(1/x),\label{equ-one}\\
		&q_{2n}(x)=x^{n-1}q_{2n}(1/x).\label{equ-two}
	\end{align}
\end{pro}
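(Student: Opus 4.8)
The plan is to derive both identities from Lemma \ref{lem-sn-pro}, which gives the functional equation $\sn(u,k)=k^{-1}\sn(ku,k^{-1})$. First I would rewrite this in terms of the Maclaurin coefficients from Lemma \ref{lem-sn-prow}. Substituting the power series $\sn(u,k)=\sum_{m\ge1}g_m(x)u^m/m!$ into both sides and recalling $x=k^2$ (so $k^{-2}=1/x$), the left-hand side has $u^{2n-1}$-coefficient $g_{2n-1}(x)/(2n-1)!$, while the right-hand side $k^{-1}\sn(ku,k^{-1})=k^{-1}\sum_{m\ge1}g_m(1/x)(ku)^m/m!$ has $u^{2n-1}$-coefficient $k^{-1}\cdot k^{2n-1}g_{2n-1}(1/x)/(2n-1)! = k^{2n-2}g_{2n-1}(1/x)/(2n-1)! = x^{n-1}g_{2n-1}(1/x)/(2n-1)!$. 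Comparing coefficients of $u^{2n-1}$ yields \eqref{equ-one} immediately; note that only odd powers appear on both sides since $\sn$ is odd, so there is no parity obstruction.

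For \eqref{equ-two}, I would apply the same substitution to $\sn^2$. Squaring the functional equation gives $\sn^2(u,k)=k^{-2}\sn^2(ku,k^{-1})$, i.e. $\sum_{n\ge1}q_{2n}(x)u^{2n}/(2n)! = k^{-2}\sum_{n\ge1}q_{2n}(1/x)(ku)^{2n}/(2n)! = \sum_{n\ge1}k^{2n-2}q_{2n}(1/x)u^{2n}/(2n)!$. Since $k^{2n-2}=x^{n-1}$, comparing the coefficients of $u^{2n}$ gives \eqref{equ-two}. Alternatively, and perhaps more cleanly for a self-contained write-up, one can derive \eqref{equ-two} directly from \eqref{equ-one} together with the explicit formula \eqref{equ-coffec-q}: substituting $x\mapsto 1/x$ in $q_{2n}(1/x)=\sum_{j=1}^n\binom{2n}{2j-1}g_{2j-1}(1/x)g_{2n-2j+1}(1/x)$ and using $g_{2j-1}(1/x)=x^{-(j-1)}g_{2j-1}(x)$ and $g_{2n-2j+1}(1/x)=x^{-(n-j)}g_{2n-2j+1}(x)$, the product of the two prefactors is $x^{-(j-1)-(n-j)}=x^{-(n-1)}$, independent of $j$, so it factors out of the sum and yields $q_{2n}(1/x)=x^{-(n-1)}q_{2n}(x)$, which is \eqref{equ-two}.

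There is essentially no hard obstacle here; the only points requiring a little care are bookkeeping of the exponents of $k$ (keeping track that $x=k^2$, so the single factor $k^{-1}$ outside and the factors $k^m$ from the argument combine to the integer power $k^{2n-2}=x^{n-1}$) and noting that the identities are between polynomials, so comparing Maclaurin coefficients is legitimate. I would present the proof via the second route for \eqref{equ-two}, since it makes the paper's use of \eqref{equ-coffec-q} explicit and avoids re-deriving the Cauchy-product expansion of $\sn^2$.

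\begin{proof}
By Lemma \ref{lem-sn-pro} we have $\sn(u,k)=k^{-1}\sn(ku,k^{-1})$. Substituting the Maclaurin expansion of Lemma \ref{lem-sn-prow} into both sides and using $x=k^2$, the coefficient of $u^{2n-1}$ on the left is $g_{2n-1}(x)/(2n-1)!$, while on the right it is
\begin{align*}
k^{-1}\cdot\frac{k^{2n-1}g_{2n-1}(1/x)}{(2n-1)!}=\frac{k^{2n-2}g_{2n-1}(1/x)}{(2n-1)!}=\frac{x^{n-1}g_{2n-1}(1/x)}{(2n-1)!}.
\end{align*}
Since $g_{2n-1}$ is a polynomial, comparing these coefficients gives \eqref{equ-one}.

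For \eqref{equ-two}, apply $x\mapsto 1/x$ in \eqref{equ-coffec-q} and then \eqref{equ-one}:
\begin{align*}
q_{2n}(1/x)&=\sum_{j=1}^n\binom{2n}{2j-1}g_{2j-1}(1/x)g_{2n-2j+1}(1/x)\\
&=\sum_{j=1}^n\binom{2n}{2j-1}x^{-(j-1)}g_{2j-1}(x)\cdot x^{-(n-j)}g_{2n-2j+1}(x)\\
&=x^{-(n-1)}\sum_{j=1}^n\binom{2n}{2j-1}g_{2j-1}(x)g_{2n-2j+1}(x)=x^{-(n-1)}q_{2n}(x),
\end{align*}
where we used $g_{2j-1}(1/x)=x^{-(j-1)}g_{2j-1}(x)$, $g_{2n-2j+1}(1/x)=x^{-(n-j)}g_{2n-2j+1}(x)$, and $(j-1)+(n-j)=n-1$. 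This is equivalent to \eqref{equ-two}, completing the proof.
\end{proof}
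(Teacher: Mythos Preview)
Your proof is correct and follows essentially the same approach as the paper: both derive \eqref{equ-one} by substituting the Maclaurin expansion of $\sn$ into the functional equation of Lemma~\ref{lem-sn-pro} and comparing coefficients, and both derive \eqref{equ-two} from \eqref{equ-one} via the explicit convolution formula \eqref{equ-coffec-q}. The only cosmetic difference is that the paper starts from $q_{2n}(x)$ and arrives at $x^{n-1}q_{2n}(1/x)$, whereas you start from $q_{2n}(1/x)$ and arrive at $x^{-(n-1)}q_{2n}(x)$.
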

\begin{proof}
	To prove \eqref{equ-one}, by Lemmas \ref{lem-sn-prow} and \ref{lem-sn-pro}, one obtains
	\begin{align*}
		\sum_{n=1}^\infty \frac{g_{2n-1}(x)}{(2n-1)!}u^{2n-1}=\sum_{n=1}^\infty \frac{x^{n-1}g_{2n-1}(1/x)}{(2n-1)!}u^{2n-1}.
	\end{align*}
	Then, comparing the coefficients of $u^{2n-1}$ yields \eqref{equ-one}. To prove \eqref{equ-two}, from \eqref{equ-coffec-q} and \eqref{equ-one} we have
	\begin{align*}
		q_{2n}(x)&=\sum_{j=1}^n \binom{2n}{2j-1} x^{j-1}g_{2j-1}(1/x)x^{n-j}g_{2n-2j+1}(1/x)\\
		&=x^{n-1}\sum_{j=1}^n \binom{2n}{2j-1} g_{2j-1}(1/x)g_{2n-2j+1}(1/x)\\
		&=x^{n-1}q_{2n}(1/x).
	\end{align*}
	Thus, we have finished the proof of the proposition.
\end{proof}

\begin{pro}\label{pro-coeffic-qs}
Notation as before. For any positive integer $m$, we have
	\begin{align}
		&q_{4m}(-1)=0,\label{equ-coeff-q-one}\\
		&q'_{4m-2}(-1)+(m-1)q_{4m-2}(-1)=0,\label{equ-coeff-q-diff-one}\\
		&q''_{4m}(-1)+2(m-1)q'_{4m}(-1)=0\label{equ-coeff-q-diff-two}.
	\end{align}
\end{pro}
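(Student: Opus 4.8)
The plan is to derive all three identities from the functional equation \eqref{equ-two}, namely $q_{2n}(x)=x^{n-1}q_{2n}(1/x)$, by differentiating with respect to $x$ and then specializing at the fixed point $x=-1$ of the involution $x\mapsto 1/x$. Writing $n=2m$ for the first identity, the relation reads $q_{4m}(x)=x^{2m-1}q_{4m}(1/x)$; setting $x=-1$ gives $q_{4m}(-1)=(-1)^{2m-1}q_{4m}(-1)=-q_{4m}(-1)$, whence $q_{4m}(-1)=0$, which is \eqref{equ-coeff-q-one}. This is the base case and requires no differentiation.

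For \eqref{equ-coeff-q-diff-one}, I would start from $q_{4m-2}(x)=x^{2m-2}q_{4m-2}(1/x)$ (taking $n=2m-1$, so $n-1=2m-2$), differentiate both sides in $x$ using the chain rule on the $q_{4m-2}(1/x)$ factor — the derivative of $x^{2m-2}q_{4m-2}(1/x)$ is $(2m-2)x^{2m-3}q_{4m-2}(1/x)+x^{2m-2}q'_{4m-2}(1/x)\cdot(-x^{-2})$ — and then evaluate at $x=-1$. Using $q_{4m-2}(-1)$ (which is \emph{not} zero in general, but appears symmetrically) and $q'_{4m-2}(-1)$ from the already-established symmetry $q_{4m-2}(x)=x^{2m-2}q_{4m-2}(1/x)$ at $x=-1$, the powers of $-1$ collapse and one is left with $2q'_{4m-2}(-1)+2(m-1)q_{4m-2}(-1)=0$, i.e. \eqref{equ-coeff-q-diff-one}. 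The only subtlety is bookkeeping the signs $(-1)^{2m-2}=1$, $(-1)^{2m-3}=-1$, $(-1)^{-2}=1$ correctly.

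For \eqref{equ-coeff-q-diff-two}, the same strategy applies to $q_{4m}(x)=x^{2m-1}q_{4m}(1/x)$ but one differentiates \emph{twice}. After the first differentiation and evaluation at $x=-1$ one recovers (using $q_{4m}(-1)=0$ from \eqref{equ-coeff-q-one}) a relation of the form $q'_{4m}(-1)=-q'_{4m}(-1)$ up to the sign $(-1)^{2m-1}=-1$, which is automatically satisfied and gives no new information; so the content genuinely lives at second order. Differentiating $x^{2m-1}q_{4m}(1/x)$ twice produces three terms: $(2m-1)(2m-2)x^{2m-3}q_{4m}(1/x)$, a cross term $-2(2m-1)x^{2m-4}q'_{4m}(1/x)$, and $x^{2m-1}\cdot\frac{d^2}{dx^2}q_{4m}(1/x)$ where the last equals $x^{-4}q''_{4m}(1/x)+2x^{-3}q'_{4m}(1/x)$. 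Setting $x=-1$, using $q_{4m}(-1)=0$ to kill the first term, and combining the $q'$ and $q''$ contributions from both sides, the surviving relation after dividing by the common nonzero constant is $2q''_{4m}(-1)+4(m-1)q'_{4m}(-1)=0$, which is \eqref{equ-coeff-q-diff-two}.

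I expect the main obstacle to be purely the sign and exponent bookkeeping: one must be careful that $q_{4m}$, $q_{4m-2}$ are being treated with the correct parity of $n-1$ in \eqref{equ-two}, and that the factor $-x^{-2}$ from differentiating $1/x$ is propagated correctly through both the first and second derivatives (including the extra $2x^{-3}q'(1/x)$ term that arises in the second derivative of a composition with $1/x$). There is no analytic difficulty — everything is polynomial identities evaluated at a point — so the proof is a short computation once the functional equation \eqref{equ-two} is differentiated the requisite number of times and specialized at the self-reciprocal point $x=-1$.
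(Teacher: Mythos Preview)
Your proposal is correct and follows essentially the same route as the paper: both proofs differentiate the functional equation $q_{2n}(x)=x^{n-1}q_{2n}(1/x)$ once and twice and then specialize at $x=-1$ with the appropriate parity of $n$. The only cosmetic difference is that the paper differentiates with $n$ kept symbolic (obtaining the intermediate identities $q'_{2n}(x)-(n-1)x^{n-2}q_{2n}(1/x)+x^{n-3}q'_{2n}(1/x)=0$ and its derivative) and then sets $n=2m-1$ or $n=2m$, whereas you fix $n$ first and differentiate via Leibniz; the computations and sign bookkeeping are identical.
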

\begin{proof}
	Setting $n=2m$ and $x=-1$ in \eqref{equ-two} gives \eqref{equ-coeff-q-one}. Differentiating \eqref{equ-two} with respect to $x$ we have
	\begin{align}\label{equ-coeff-q-diff-relation}
		q'_{2n}(x)-(n-1)x^{n-2}q_{2n}(1/x)+x^{n-3}q'_{2n}(1/x)=0.
	\end{align}
	Then, letting $x=-1$ and $n=2m-1$ yields \eqref{equ-coeff-q-diff-one}. Further, differentiating \eqref{equ-coeff-q-diff-relation} with respect to $x$, we obtain
	\begin{align}\label{equ-coeff-q-diff-relation-two}
		q''_{2n}(x)-(n-1)(n-2)x^{n-3}q_{2n}(1/x)+2(n-2)x^{n-4}q'_{2n}(1/x)-x^{n-5}q''_{2n}(1/x)=0.
	\end{align}
	Finally, taking $x=-1$ and $n=2m$ in \eqref{equ-coeff-q-diff-relation-two} yields \eqref{equ-coeff-q-diff-two}.
\end{proof}

\begin{thm}\label{thm-main-two-sinh}
For any integer $m>0$
\begin{align*}	
\sum_{n=1}^{\infty}\frac{(-1)^nn^{2m+1}}{\sinh(ny)}=\frac{(2m)!}{2^{2m+2}}z^{2m+2}x(x-1)R_{2m}(x)\in\mathbb{Q}[x,z],
\end{align*}
where $R_{2m}(x)=\frac{(x-1)^{m-1}}{(2m)!} q_{2m}\Big(\frac{x}{x-1}\Big) \in \mathbb{Q}[x]$.
\end{thm}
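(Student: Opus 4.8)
The plan is to realise $\sum_{n\ge1}\frac{(-1)^n n^{2m+1}}{\sinh(ny)}$ as a Maclaurin coefficient of the Jacobi elliptic function ${\rm cd}^2=\cn^2/\dn^2$, and then to identify that coefficient with $q_{2m}$ evaluated at $x/(x-1)$ by means of an imaginary--modulus transformation.

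First I would start from the classical Fourier expansion
\[
\dn^2(u,k)=\frac{E}{K}+\Big(\frac{\pi}{K}\Big)^2\sum_{n=1}^\infty\frac{2nq^n}{1-q^{2n}}\cos\frac{n\pi u}{K}
\]
(see, e.g., \cite{WW1966} or \cite{B1991}), which via $\dn^2=1-k^2\sn^2$ is equivalent to a Fourier series for $\sn^2(u,k)$. Replacing $u$ by $u+K$ and using $\sn(u+K,k)={\rm cd}(u,k)=\cn(u,k)/\dn(u,k)$ together with $\cos\frac{n\pi(u+K)}{K}=(-1)^n\cos\frac{n\pi u}{K}$ turns this into a Fourier series for ${\rm cd}^2(u,k)$ whose $n$-th term carries the alternating sign $(-1)^n$. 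Writing ${\rm cd}^2(u,k)=\sum_{m\ge0}\frac{c_{2m}(x)}{(2m)!}u^{2m}$, comparing the coefficients of $u^{2m}$ for $m\ge1$, and inserting $K=\pi z/2$, $k^2=x$ and $\frac{q^n}{1-q^{2n}}=\frac1{2\sinh(ny)}$, I expect to reach
\[
\sum_{n=1}^\infty\frac{(-1)^n n^{2m+1}}{\sinh(ny)}=\frac{(-1)^{m+1}}{2^{2m+2}}\,x\,z^{2m+2}\,c_{2m}(x).
\]

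The second ingredient is the identity ${\rm cd}(u,k)=\cn(k'u,\nu)$, where $\nu$ is the modulus with $\nu^2=\frac{x}{x-1}=-k^2/k'^2$ (so $\nu'^2=1/k'^2$). This is an instance of the imaginary--modulus transformation; it is checked in a couple of lines by noting that both sides are even, equal $1$ at $u=0$, and solve the same equation $(w')^2=(1-w^2)(1-k^2w^2)$, or it may be extracted from Lemma \ref{lem-sn-pro} together with $\sn(iv,\ell)=i\,\sn(v,\ell')/\cn(v,\ell')$. Hence ${\rm cd}^2(u,k)=\cn^2(k'u,\nu)=1-\sn^2(k'u,\nu)$; substituting $\sn^2(v,\nu)=\sum_{n\ge1}\frac{q_{2n}(\nu^2)}{(2n)!}v^{2n}$ (Lemma \ref{lem-coff-sn-pro} at modulus $\nu$) and comparing the coefficients of $u^{2m}$, $m\ge1$, gives $c_{2m}(x)=-k'^{2m}q_{2m}(\nu^2)=-(1-x)^m q_{2m}\big(\frac{x}{x-1}\big)$. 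Feeding this back and using $(-1)^m(1-x)^m=(x-1)^m$, then rewriting $q_{2m}\big(\frac{x}{x-1}\big)=\frac{(2m)!}{(x-1)^{m-1}}R_{2m}(x)$, yields exactly $\frac{(2m)!}{2^{2m+2}}z^{2m+2}x(x-1)R_{2m}(x)$; the membership $R_{2m}(x)\in\Q[x]$ follows because $q_{2m}(x)=x^{m-1}q_{2m}(1/x)$ (see \eqref{equ-two}) forces $\deg q_{2m}\le m-1$, whence $(x-1)^{m-1}q_{2m}\big(\frac{x}{x-1}\big)$ is a genuine polynomial, and then the whole right-hand side lies in $\Q[x,z]$.

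I expect the first step to be the main obstacle if one insists on staying within the machinery already recorded in the paper: deriving the $\dn^2$ (equivalently ${\rm cd}^2$) Fourier expansion from the Fourier series of $\sn(u,k)$ forces one to square that series and collapse the double sum $\sum_{a,b\ \mathrm{odd}}\frac{\cos((\cdots)u/z)}{\sinh(ay/2)\sinh(by/2)}$ into the single Lambert sum $\frac{nq^n}{1-q^{2n}}=\frac{n}{2\sinh(ny)}$, the standard but slightly delicate rearrangement of products of such series. If instead one simply cites the $\dn^2$ expansion, the argument above is short; the transformation identity for ${\rm cd}$ is then the only remaining point, and its verification is routine.
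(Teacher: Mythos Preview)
Your proposal is correct, and it is a close variant of the paper's argument: both rest on the classical Fourier expansion of $\sn^2$ (the paper simply cites Greenhill for it, so your worry about having to square the $\sn$-series is unnecessary) together with the imaginary-modulus transformation $k^2\mapsto x/(x-1)$. The packaging differs. The paper inserts the alternating sign by applying Ramanujan's sign-change process $x\to x/(x-1)$, $q\to -q$, $z\to z\sqrt{1-x}$ to the $\sn^2$ identity as a whole, obtaining $\sn^2\big(u,\nu\big)$ with $\nu^2=x/(x-1)$ on the left and $(-1)^n$ in the Lambert sum on the right, and then compares Maclaurin coefficients. You produce the same $(-1)^n$ by the half-period shift $u\mapsto u+K$, landing on ${\rm cd}^2(u,k)$, and then invoke the imaginary-modulus identity ${\rm cd}(u,k)=\cn(k'u,\nu)$ to reach $1-\sn^2(k'u,\nu)$; the extra factor $(k')^{2m}=(1-x)^m$ you pick up this way is exactly what the paper's rescaling $z\to z\sqrt{1-x}$ supplies on the other route. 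Since the sign-change process \emph{is} the parameter side of the imaginary-modulus transformation, the two computations are really the same calculation read in two directions; your version makes the origin of the $(-1)^n$ more transparent, while the paper's is a line shorter once the sign-change dictionary is taken for granted.
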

\begin{proof}
From Greenhill's treatise \cite[P. 286, eq.(50)]{Greehill1892},
\begin{align}	
\sn^2 u= \left(1-\frac{E}{K}\right)\frac1{k^2}-\frac{\pi^2}{K^2k^2}\sum_{n=1}^\infty \frac{n \cos(n\pi u/K)}{\sinh(n\pi K'/K)}.
\end{align}
Applying \eqref{rel-den}, the above identity can be rewritten as (also see \cite[eq. (5.8)]{Berndt2016})
\begin{align}	
\sn^2 u= \left(1-\frac{E}{K}\right)\frac1{k^2}-\frac{4}{z^2 x}\sum_{n=1}^\infty \frac{n \cos(2nu/z)}{\sinh(ny)}.
\end{align}
We now apply Jacobi and Ramanujan's process of changing the sign
\[
x\to\frac{x}{x-1},\quad q\to -q,\quad z\to z\sqrt{1-x}.
\]
Then (also see \cite[eq. (5.10)]{Berndt2016}),
\begin{align*} \sn^2\left(u,\frac{k}{i\sqrt{1-k^2}}\right)=
&\left(1-\frac{E}{K}\right)\frac{1}{k^2}-\frac{8(x-1)}{(1-x)xz^2}
\sum_{n=1}^{\infty}\frac{(-1)^nn}{e^{ny}-e^{-ny}}\cos\left(\frac{2nu}{\sqrt{1-x}z}\right)\\
=&\left(1-\frac{E}{K}\right)\frac{1}{k^2}+\sum_{m=0}^{\infty}\frac{8}{z^2x}
\sum_{n=1}^{\infty}\frac{(-1)^nn}{e^{ny}-e^{-ny}}\frac{(-1)^m}{(2m)!}\left(\frac{2nu}{\sqrt{1-x}z}\right)^{2m}\\
=&\left(1-\frac{E}{K}\right)\frac{1}{k^2}+\sum_{m=0}^{\infty}(-1)^m 4\sum_{n=1}^{\infty}\frac{(-1)^n(2n)^{2m+1}}{e^{ny}-e^{-ny}}\frac{u^{2m}}{(2m)!x(1-x)^mz^{2m+2}}.
\end{align*}
On the other hand, from \eqref{equ-one} we can find that $\deg(g_{2n-1}(x))=\deg(q_{2n}(x))=n-1$. Hence, we obtain
\begin{align*}
\sn^2\left(u,\frac{k}{i\sqrt{1-k^2}}\right)=\sum_{m=1}^{\infty}\frac{R_{2m}(x)}{(x-1)^{m-1}}u^{2m}.
\end{align*}
Then, comparing the coefficients of $u^{2m}$ yields the desired result.
\end{proof}

\begin{exa} By \emph{Mathematica}, comparing the coefficients of $u^2,u^4$ and $u^6$, we obtain
	\begin{align*}
		\sum_{n=1}^{\infty}\frac{(-1)^nn^3}{\sinh(ny)}=&\frac{1}{8}z^{4}x(x-1),\\
		\sum_{n=1}^{\infty}\frac{(-1)^nn^5}{\sinh(ny)}=&\frac{1}{8}z^{6}x(x-1)(1-2x),\\
		\sum_{n=1}^{\infty}\frac{(-1)^nn^7}{\sinh(ny)}=&\frac{1}{16}z^{8}x(x-1)(2-17x+17x^2).
	\end{align*}
\end{exa}

Noting that
\begin{align}			
&\sum_{n=1}^{\infty}\frac{(-1)^nn^{2p+3}}{\sinh^3(ny)}=\frac{1}{2}\left(\frac{dx}{dy}\right)^2\frac{d^2}{dx^2}\sum_{n=1}^{\infty}\frac{(-1)^nn^{2p+1}}{\sinh(ny)}
-\frac{1}{2}\sum_{n=1}^{\infty}\frac{(-1)^nn^{2p+3}}{\sinh(ny)}\nonumber\\
&\qquad\qquad\qquad\quad\quad+\frac{1}{2}\frac{d^2y}{dx^2}\left(\frac{dx}{dy}\right)^2\sum_{n=1}^{\infty}\frac{(-1)^nn^{2p+2}
\cosh(ny)}{\sinh^2(ny)}\label{equ-sinh-one},\\
&\sum_{n=1}^{\infty}\frac{(-1)^nn^{2p+4}\cosh(ny)}{\sinh^4(ny)}
=-\frac{1}{3}\frac{dx}{dy}\frac{d}{dx}\sum_{n=1}^{\infty}\frac{(-1)^nn^{2p+3}}{\sinh^3(ny)}\label{equ-sinh-two}
\end{align}
and
\begin{align}		
\sum_{n=1}^{\infty}\frac{(-1)^nn^{2p+5}}{\sinh^5(ny)}=&\frac{1}{3}
\left(\frac{dx}{dy}\right)^2\frac{d^2}{dx^2}\sum_{n=1}^{\infty}\frac{(-1)^nn^{2p+3}}{\sinh^3(ny)}-3\sum_{n=1}^{\infty}\frac{n^{2p+5}\cosh^2(ny)}
{\sinh^5(ny)}\nonumber\\&+\frac{d^2y}{dx^2}\left(\frac{dx}{dy}\right)^2\sum_{n=1}^{\infty}\frac{n^{2p+4}\cosh(ny)}{\sinh^4(ny)}\label{equ-sinh-three},
\end{align}
we have
\begin{align*}
&\sum_{n=1}^{\infty}\frac{(-1)^nn^{5}}{\sinh^3(ny)}=\frac{(x-1)xz^6}{16}\begin{Bmatrix}
			 2x-1+z^2\left(6x^2-6x+1\right)+20(x-1)^2x^2(z')^2\\+2(x-1)xz\big[7(2x-1)z'+2(x-1)xz''\big]
		\end{Bmatrix},\\
		 &\sum_{n=1}^{\infty}\frac{(-1)^nn^{6}\cosh(ny)}{\sinh^4(ny)}\\&=-\frac{(x-1)xz^7}{48}\begin{Bmatrix}
 (24x^3-36x^2+14x-1)z^3+6(x-1)xz'\big(20(z')^2x^2(x-1)^2+2x-1\big)\\
 +z\big(6x^2-6x+1+158(x-1)^2x^2(2x-1)(z')^2+68(x-1)^3x^3z'z''\big)\\
+2(x-1)xz^2\Big[2(47x^2-47x+9)z'
+(x-1)x\big(13(2x-1)z''+2(x-1)xz^{(3)}\big)\Big]
		\end{Bmatrix},\\
 &\sum_{n=1}^{\infty}\frac{(-1)^nn^{7}}{\sinh^5(ny)}\\&=\frac{(x-1)xz^8}{384}\begin{Bmatrix}			 z^4\left(60x(x-1)(2x-1)^2+2\right)\\+3\left[6+51(x-1)x+280(x-1)^2x^2(z')^2\left((z')^2x^2(x-1)^2+2x-1\right)\right]\\
 +8(x-1)xz\Big[10\big(5+27(x-1)x\big)z'+436(x-1)^2x^2(2x-1)(z')^3 \\
 +(x-1)x\big(226(x-1)^2x^2(z')^2+30x-15\big)z''\Big]\\
 +4z^2\Big[5(2x-1)\big(12(x-1)x+1\big)\\
 +(x-1)^2x^2\Big(\big(1952(x-1)x+399\big)(z')^2+34(x-1)^2x^2(z'')^2\\
 +(x-1)xz'\big(411(2x-1)z''+52(x-1)xz^{(3)}\big)\Big)\Big]\\
 +4(x-1)xz^3\Big[(2x-1)\big(342x(x-1)+41\big)z'+2(x-1)^3x^3z^{(4)}\\
 +3(x-1)x\Big(\big(92x(x-1)+19\big)z''+7(x-1)x(2x-1)z^{(3)}\Big) \Big]
\end{Bmatrix}.
\end{align*}

\begin{thm}\label{sinh}
Set $\Gamma=\Gamma(1/4)$ and $R_{n}^{(k)}=R_{n}^{(k)}(1/2)$ for all $n$ and $k$. Then, for any integer $m>1$ we have
\begin{align*}
&\sum_{n=1}^{\infty}\frac{(-1)^nn^{4m-3}}{\sinh^3(n\pi)}
=-\frac{(4m-6)!\Gamma^{8m}}{2^{8m+3}\pi^{6m}} \Big\{4\big(64(m-1)(4m-3)\pi^4/\Gamma^8+m-3\big)R_{4m-6}+R''_{4m-6}\Big\} ,\\	
&\sum_{n=1}^{\infty}\frac{(-1)^nn^{4m-1}}{\sinh^3(n\pi)}
=-\frac{(4m-4)!\Gamma^{8m} }{\pi^{6m+1}2^{8m+3}}\Big\{(4m-1)R'_{4m-4}-2\left(3+2m(4m-5)\right)\pi R_{4m-2}\Big\},\\	
&\sum_{n=1}^{\infty}\frac{(-1)^nn^{4m-2}\cosh(n\pi)}{\sinh^4(n\pi)}
=-\frac{(4m-6)!\Gamma^{8m}}{3\cdot2^{8m+3}\pi^{6m}}\begin{Bmatrix}
256(m-1)(2m-1)(4m-3)\pi^3R_{4m-6}/\Gamma^8\\
-4(m-1)(4m-5)R'_{4m-4}\\
+3(2m-1)\left[4(m-3)R_{4m-6}+R''_{4m-6}\right]/\pi\end{Bmatrix},\\		
&\sum_{n=1}^{\infty}\frac{(-1)^nn^{4m-1}}{\sinh^5(n\pi)}
=-\frac{(4m-1)!\Gamma^{8m-8} R_{4m-6}}{2^{8m+1}\cdot3(4m-5)\pi^{6m-2}}\\
&\quad\quad\quad\quad-\frac{(4m-6)!\Gamma^{8m}}{3\cdot2^{8m+13}\pi^{6m+6}}\begin{Bmatrix}
			256\pi^4\Big[72(m-1)(2m-1)(4m-5)(4m-3)\pi^2 R_{4m-2}  \\
 -40(m-1)(4m-5)(4m-1)\pi R'_{4m-4} \\
 +3(8m^2-6m+1)\big(4(m-3)R_{4m-6}+R''_{4m-6}\big)\Big]\\
+ \Gamma^8 \big[8(6m^2-37m+55)R_{4m-6}
+24(m-4)R''_{4m-6}+R^{(4)}_{4m-6}\big]
		\end{Bmatrix}.
\end{align*}
\end{thm}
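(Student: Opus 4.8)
The plan is to follow the strategy of the proof of Theorem~\ref{cosh}, now with the Jacobi functions $\sn$ and $\sn^2$ in place of $\mathrm{sd}$. The starting point is Theorem~\ref{thm-main-two-sinh}, which gives, for every integer $k\ge1$,
\[
S_{2k+1}:=\sum_{n=1}^{\infty}\frac{(-1)^n n^{2k+1}}{\sinh(ny)}=\frac{(2k)!}{2^{2k+2}}\,z^{2k+2}x(x-1)R_{2k}(x),\qquad R_{2k}(x)=\frac{(x-1)^{k-1}}{(2k)!}\,q_{2k}\!\Big(\tfrac{x}{x-1}\Big)\in\Q[x].
\]
First I would use the differential identities \eqref{equ-sinh-one}, \eqref{equ-sinh-two}, \eqref{equ-sinh-three} --- together with $\sum_{n}(-1)^n n^{2k+2}\cosh(ny)/\sinh^2(ny)=-\tfrac{dx}{dy}\tfrac{d}{dx}S_{2k+1}$, the identity $\cosh^2=1+\sinh^2$, the relation $dx/dy=-x(1-x)z^2$ and the formula for $d^2y/dx^2$ recorded in Remark~\ref{re-hy-cosh} --- to rewrite each of the sums $\sum(-1)^n n^{a}/\sinh^3(ny)$, $\sum(-1)^n n^{a}\cosh(ny)/\sinh^4(ny)$ and $\sum(-1)^n n^{a}/\sinh^5(ny)$ as an explicit polynomial in $z,z',z'',z^{(3)},z^{(4)}$ and in the $R_{2j}(x)$ and their $x$-derivatives up to order four. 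Taking then $a=4m-3$, $a=4m-2$ and $a=4m-1$ isolates exactly the polynomials $R_{4m-6},R_{4m-4},R_{4m-2}$ and their derivatives appearing in the statement.

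Next I would specialize at $x=1/2$, where $y=\pi$ (so $\sinh(ny)=\sinh(n\pi)$) and the derivatives $z^{(j)}(1/2)$ for $j=0,\dots,4$ are the monomials in $\Gamma=\Gamma(1/4)$ and $\pi$ listed in \eqref{special-case-x}. This expresses each target sum as a $\Q$-linear combination of the numbers $R^{(j)}_{4m-6}(1/2),R^{(j)}_{4m-4}(1/2),R^{(j)}_{4m-2}(1/2)$ times explicit powers of $\Gamma$ and $\pi$. The step that collapses these combinations to the stated form --- the analogue of the vanishing $p_{4m-1}(1/2)=0$, $p'_{4m-3}(1/2)=0$, \dots used in Theorem~\ref{cosh} --- is the translation of Proposition~\ref{pro-coeffic-qs}: the map $t:=x/(x-1)$ sends $x=1/2$ to $t=-1$ (with $dt/dx=-4$ and $d^2t/dx^2=-16$ there), so applying the chain rule to $R_{2k}(x)=\frac{(x-1)^{k-1}}{(2k)!}q_{2k}(t)$ turns \eqref{equ-coeff-q-one}--\eqref{equ-coeff-q-diff-two} at $t=-1$ into $R_{4m-4}(1/2)=0$, a linear relation between $R'_{4m-6}(1/2)$ and $R_{4m-6}(1/2)$, one between $R'_{4m-2}(1/2)$ and $R_{4m-2}(1/2)$, and one between $R''_{4m-4}(1/2)$ and $R'_{4m-4}(1/2)$. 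Feeding these back eliminates $R_{4m-4}(1/2)$, $R''_{4m-4}(1/2)$, $R'_{4m-6}(1/2)$ and $R'_{4m-2}(1/2)$, leaving precisely the combinations of $R_{4m-6}$, $R''_{4m-6}$, $R^{(4)}_{4m-6}$, $R'_{4m-4}$ and $R_{4m-2}$ (all at $1/2$) displayed in the theorem.

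I expect the main difficulty to be computational bookkeeping rather than any conceptual point. Carrying two $x$-derivatives through $S_{2k+1}$ and through the nested identities \eqref{equ-sinh-one}--\eqref{equ-sinh-three} generates many monomials in $z,z',\dots,z^{(4)}$; after substituting \eqref{special-case-x} these carry several different powers of $\Gamma$, and one must check that the ``intermediate'' powers cancel so that only $\Gamma^{8m}$ (and, in the $\sinh^5$ case, $\Gamma^{8m-8}$) remain, as claimed. This cancellation is forced by the special relations $z''(1/2)=z(1/2)$, $z^{(3)}(1/2)=9z'(1/2)$, $z^{(4)}(1/2)=25z(1/2)$ and $z(1/2)z'(1/2)=2/\pi$ implicit in \eqref{special-case-x}; together with the chain-rule form of Proposition~\ref{pro-coeffic-qs} applied at exactly the right indices (and taking care not to discard an $R_{4m-4}(1/2)$-type term before it is known to vanish), they yield the four identities. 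All of this is of the same nature as the computations already done for the examples following Theorem~\ref{thm-main-two-sinh} and is readily confirmed with a computer algebra system.
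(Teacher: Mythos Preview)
Your plan is correct and is precisely the route the paper takes: start from Theorem~\ref{thm-main-two-sinh}, apply the differential identities \eqref{equ-sinh-one}--\eqref{equ-sinh-three} together with $dx/dy=-x(1-x)z^2$, specialize at $x=1/2$ via \eqref{special-case-x}, and then use Proposition~\ref{pro-coeffic-qs} to simplify. One small sharpening: the ``linear relations'' you anticipate are in fact exact vanishings --- carrying out the chain rule on $R_{2k}(x)=\tfrac{(x-1)^{k-1}}{(2k)!}q_{2k}\big(\tfrac{x}{x-1}\big)$ at $x=1/2$ shows that the combinations of $q_{2k}(-1)$ and $q'_{2k}(-1)$ (resp.\ $q'_{2k}(-1)$ and $q''_{2k}(-1)$) that arise are \emph{exactly} those annihilated by \eqref{equ-coeff-q-diff-one} and \eqref{equ-coeff-q-diff-two}, so $R_{4m-4}(1/2)=R'_{4m-2}(1/2)=R'_{4m-6}(1/2)=R''_{4m-4}(1/2)=0$; this is how the paper records the step and it makes the subsequent bookkeeping considerably lighter.
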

\begin{proof}
From Proposition \ref{pro-coeffic-qs}, we get
\begin{align}
	&R_{4m-4}(1/2)=-\frac{1}{2^{2m-3}(4m-4)!}q_{4m-4}(-1)=0,\\
	&R'_{4m-2}(1/2)=-\frac1{2^{2m-4}(4m-2)!} \Big(q'_{4m-2}(-1)+(m-1)q_{4m-2}(-1)\Big)=0,\\
	&R'_{4m-6}(1/2)=-\frac1{2^{2m-6}(4m-6)!} \Big(q'_{4m-6}(-1)+(m-2)q_{4m-6}(-1)\Big)=0,\\
	&R''_{4m-4}(1/2)=-\frac1{2^{2m-7}(4m-4)!} \Big(q''_{4m-4}(-1)+2(m-2)q'_{4m-4}(-1)\Big)=0.
\end{align}
Hence, the theorem follows from Theorem \ref{thm-main-two-sinh}, \eqref{special-case-x} and \eqref{equ-sinh-one}-\eqref{equ-sinh-three}.
\end{proof}

\begin{exa} Set $\Gamma=\Gamma(1/4)$. By \emph{Mathematica}, setting $m=2,3$, we have
	\begin{align*}
		 \sum_{n=1}^{\infty}&\frac{(-1)^nn^{5}}{\sinh^3(n\pi)}=-\frac{5\Gamma^8}{2^{10}\pi^8}+\frac{\Gamma^{16}}{2^{16}\pi^{12}},\\
		 \sum_{n=1}^{\infty}&\frac{(-1)^nn^{7}}{\sinh^3(n\pi)}=\frac{7\Gamma^{16}}{2^{15}\pi^{13}}-\frac{9\Gamma^{16}}{2^{17}\pi^{12}},\\
		 \sum_{n=1}^{\infty}&\frac{(-1)^nn^{9}}{\sinh^3(n\pi)}=\frac{81\Gamma^{16}}{2^{16}\pi^{14}}-\frac{17\Gamma^{24}}{2^{22}\pi^{18}},\\
		 \sum_{n=1}^{\infty}&\frac{(-1)^nn^{11}}{\sinh^3(n\pi)}=-\frac{297\Gamma^{24}}{2^{21}\pi^{19}}+\frac{189\Gamma^{24}}{2^{22}\pi^{18}},\\
		 \sum_{n=1}^{\infty}&\frac{(-1)^nn^{6}\cosh(n\pi)}{\sinh^4(n\pi)}=\frac{3\Gamma^{16}}{2^{16}\pi^{13}}-\frac{\Gamma^{16}}{3\cdot2^{15}\pi^{12}}-\frac{5\Gamma^8}
{2^{10}\pi^9},\\
		 \sum_{n=1}^{\infty}&\frac{(-1)^nn^{10}\cosh(n\pi)}{\sinh^4(n\pi)}=\frac{135\Gamma^{16}}{2^{16}\pi^{15}}-\frac{85\Gamma^{24}}{2^{22}\pi^{19}}
+\frac{9\Gamma^{24}}{2^{21}\pi^{18}},\\
		 \sum_{n=1}^{\infty}&\frac{(-1)^nn^{7}}{\sinh^5(n\pi)}=-\frac{35\Gamma^{8}}{2^{13}\pi^{10}}+\frac{21\Gamma^{16}}{2^{18}\pi^{14}}
-\frac{35\Gamma^{16}}{3\cdot2^{16}\pi^{13}}+\frac{27\Gamma^{16}}{2^{19}\pi^{12}}-\frac{5\Gamma^{24}}{3\cdot2^{25}\pi^{18}},\\
		 \sum_{n=1}^{\infty}&\frac{(-1)^nn^{11}}{\sinh^5(n\pi)}=\frac{1485\Gamma^{16}}{2^{19}\pi^{16}}-\frac{935\Gamma^{24}}{2^{24}\pi^{20}}
+\frac{495\Gamma^{24}}{2^{22}\pi^{19}}-\frac{567 \Gamma^{24}}{2^{24}\pi^{18}}+\frac{65\Gamma^{32}}{2^{31}\pi^{24}}.
	\end{align*}
\end{exa}

\section{Berndt-Type Integrals of Order Three}

\begin{thm}\label{thm-exp-one-BT}  Set $\Gamma=\Gamma(1/4)$ and $p_{n}^{(k)}=p_{n}^{(k)}(1/2)$ for all $n$ and $k$. Then,
for any integer $m>0$ we have
\begin{align}
&\int_{0}^{\infty}\frac{x^{4m+1}\, dx}{(\cos x+\cosh x)^3}\nonumber\\
& =\frac{(-1)^m\Gamma^{8m-4}}{2^{6m+17}\pi^{2m+7}} \begin{Bmatrix}		
256\pi^6\Gamma^8p_{4m+1}-32768m(2m-1)(4m-1)(4m+1)\pi^8p_{4m-3}\\
+512(4m+1)\pi^4\Gamma^8 \Big[\pi p'_{4m-1} + m(4m-6)p_{4m-3}+mp''_{4m-3} \Big]\\
-\Gamma^{16}\left[8(m-2)(6m-7)p_{4m-3}+12(2m-5)p''_{4m-3}+p^{(4)}_{4m-3}\right]
	\end{Bmatrix}.
\end{align}
\end{thm}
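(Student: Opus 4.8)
The plan is to combine Corollary~\ref{corcos+cos} with the explicit evaluations in Theorem~\ref{cosh}. Taking $p=m$ in Corollary~\ref{corcos+cos} expresses
\[
\frac{(-4)^{m+1}}{\pi^{4m}}\int_{0}^{\infty}\frac{x^{4m+1}\,dx}{(\cos x+\cosh x)^3}
\]
as a linear combination, with coefficients polynomial in $m$ and in $\pi$, of the four hyperbolic sums
\begin{align*}
&\sum_{n=1}^{\infty}\frac{(-1)^{n}(2n-1)^{4m-1}}{\cosh^3(\tilde{n}\pi)},\qquad
\sum_{n=1}^{\infty}\frac{(-1)^{n}(2n-1)^{4m+1}}{\cosh^3(\tilde{n}\pi)},\\
&\sum_{n=1}^{\infty}\frac{(-1)^{n}(2n-1)^{4m}\sinh(\tilde{n}\pi)}{\cosh^4(\tilde{n}\pi)},\qquad
\sum_{n=1}^{\infty}\frac{(-1)^{n}(2n-1)^{4m+1}}{\cosh^5(\tilde{n}\pi)}.
\end{align*}

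First I would reconcile the two indexing conventions. Since $\tilde{n}\pi=(2n-1)\pi/2$, the shift $n\mapsto n+1$ turns $\sum_{n\ge1}(-1)^{n}f(2n-1)$ into $-\sum_{n\ge0}(-1)^{n}f(2n+1)$, so each of the four sums above equals $-1$ times the corresponding sum on the left of Theorem~\ref{cosh} (with the same $m$). Since $m>0$, Theorem~\ref{cosh} applies and expresses each of these four sums as an explicit $\mathbb{Q}$-linear combination of $\Gamma^{8m+4}/\pi^{6m+4}$, $\Gamma^{8m-4}/\pi^{6m+3}$, $\Gamma^{8m-4}/\pi^{6m}$ and $\Gamma^{8m-4}/\pi^{6m+9}$ times the quantities $p_{4m+1}$, $p'_{4m-1}$, $p_{4m-3}$, $p''_{4m-3}$, $p^{(4)}_{4m-3}$ (all at $1/2$); the simplifications in Theorem~\ref{cosh} already use that $p_{4m-1}$, $p'_{4m+1}$, $p'_{4m-3}$, $p''_{4m-1}$, $p^{(4)}_{4m-1}$ all vanish at $1/2$, which is why only those five quantities survive.

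It then remains to substitute the four closed forms, multiply through by $\pi^{4m}/(-4)^{m+1}=(-1)^{m+1}\pi^{4m}/2^{2m+2}$ to isolate the integral, and collect the outcome by the monomials $p_{4m+1}$, $p'_{4m-1}$, $p_{4m-3}$, $p''_{4m-3}$, $p^{(4)}_{4m-3}$. Here the factor $3$ multiplying the $\sinh/\cosh^4$ and $\cosh^5$ sums in Corollary~\ref{corcos+cos} exactly cancels the $1/3$'s appearing in Theorem~\ref{cosh}; the deepest $2$-power denominator, $2^{4m+15}$ coming from the $\cosh^5$ evaluation, combines with $2^{2m+2}$ to give the $2^{6m+17}$ in the statement; and the surviving numerators reassemble into the bracketed expression with overall factor $(-1)^{m}\Gamma^{8m-4}/(2^{6m+17}\pi^{2m+7})$, whose five monomials in $\Gamma$ and $\pi$ are precisely those listed in Theorem~\ref{Main-one-theorem}. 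The only genuine obstacle is the bookkeeping: tracking the powers of $2$ and $\pi$ and the signs introduced by the re-indexing and by $(-4)^{m+1}$. This is routine but easy to get wrong, so — as with the worked examples in the paper — I would confirm the final collection with a computer algebra system.
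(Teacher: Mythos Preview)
Your proposal is correct and matches the paper's own proof, which simply says that the result follows from Corollary~\ref{corcos+cos} and Theorem~\ref{cosh} after a tedious computation. Your additional remarks on the index shift, the cancellation of the factors of $3$, and the $2$-power bookkeeping are accurate elaborations of exactly that computation.
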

\begin{proof}
Using Corollary \ref{corcos+cos} and Theorem \ref{cosh}, we may deduce the desired evaluation after a rather tedious
computation.
\end{proof}

\begin{exa}
 Set $\Gamma=\Gamma(1/4)$. When $m=1,2,3$, we have
\begin{align*}
\int_{0}^{\infty}&\frac{x^5\, dx}{(\cos x+\cosh x)^3}=\frac{15\Gamma^4}{2^8\pi}+\frac{5\Gamma^{12}}{2^{13}\pi^5}-\frac{5\Gamma^{12}}{2^{13}\pi^4}+\frac{3\Gamma^{12}}{2^{15}\pi^3}+\frac{\Gamma^{20}}{2^{20}\pi^9},\\
\int_{0}^{\infty}&\frac{x^9\, dx}{(\cos x+\cosh x)^3}=\frac{567\Gamma^{12}}{2^{13}\pi^3}+\frac{117\Gamma^{20}}{2^{18}\pi^7}-\frac{297\Gamma^{20}}{2^{19}\pi^6}+\frac{189\Gamma^{20}}{2^{21}\pi^5}
+\frac{3\Gamma^{28}}{2^{22}\pi^{11}},\\
\int_{0}^{\infty}&\frac{x^{13}\, dx}{(\cos x+\cosh x)^3}=\frac{405405\Gamma^{20}}{2^{20}\pi^5}+\frac{84591\Gamma^{28}}{2^{25}\pi^9}-\frac{107757\Gamma^{28}}{2^{25}\pi^8}+\frac{68607\Gamma^{28}}
{2^{27}\pi^7}+\frac{17679\Gamma^{36}}{2^{32}\pi^{13}}.
\end{align*}
\end{exa}

By numerical computation we believe the following evaluation holds.

\begin{conj} We have
\begin{equation*}
\int_{0}^{\infty} \frac{x\, dx}{(\cos x+\cosh x)^3}=-\frac{\Gamma^4}{2^7\pi^2}+\frac{\Gamma^4}{2^9\pi}+\frac{\Gamma^{12}}{2^{13}\pi^7}.
\end{equation*}
\end{conj}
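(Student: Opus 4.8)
\medskip
\noindent\textit{Proof proposal.} The plan is to run the argument of Theorem~\ref{thm-exp-one-BT} at the boundary value $m=0$, where the polynomials $p_{4m-1},p_{4m-3}$ of Theorem~\ref{cosh} are no longer available. Putting $p=0$ in Corollary~\ref{corcos+cos} annihilates the first sum (its coefficient $4p(4p+1)$ vanishes) and leaves
\[
-4\int_{0}^{\infty}\frac{x\,dx}{(\cos x+\cosh x)^3}=\frac{5\pi^2}{2}\,S_1-3\pi\,S_2-3\pi^2\,S_3,
\]
where $S_1=\sum_{n\ge1}\frac{(-1)^n(2n-1)}{\cosh^3(\tn\pi)}$, $S_2=\sum_{n\ge1}\frac{(-1)^n\sinh(\tn\pi)}{\cosh^4(\tn\pi)}$ and $S_3=\sum_{n\ge1}\frac{(-1)^n(2n-1)}{\cosh^5(\tn\pi)}$; so everything reduces to evaluating these three ``$m=0$'' hyperbolic sums at $y=\pi$.

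For $S_1$ I would use the elementary identity $\mathrm{sech}^3t=\tfrac12\bigl(\mathrm{sech}\,t-\tfrac{d^2}{dt^2}\mathrm{sech}\,t\bigr)$ with $t=(2n+1)y/2$, which turns the underlying sum $\sum_{n\ge0}\frac{(-1)^n(2n+1)}{\cosh^3((2n+1)y/2)}$ into $\tfrac12\sum_{n\ge0}\frac{(-1)^n(2n+1)}{\cosh((2n+1)y/2)}-2\tfrac{d^2}{dy^2}U_0(y)$, where $U_0(y):=\sum_{n\ge0}\frac{(-1)^n}{(2n+1)\cosh((2n+1)y/2)}$. The first sum is $\tfrac12 z^2\sqrt{x(1-x)}$ (the $m=0$ instance of the Example following \eqref{equ-sd-coffoc}); and the ``anomalous'' series $U_0$ is pinned down by integrating the Fourier expansion \eqref{sd} of $\mathrm{sd}$ term by term from $0$ to $K$, which gives $\int_0^K\mathrm{sd}(u)\,du=\tfrac{2}{\sqrt{x(1-x)}}U_0(y)$, together with the elementary antiderivative $\int_0^K\mathrm{sd}(u)\,du=\tfrac1{kk'}\arctan(k/k')=\frac{\arcsin\sqrt{x}}{\sqrt{x(1-x)}}$ (substitute $w=\cn(u)$ and use $\cn(K)=0$). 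Hence $U_0(y)=\tfrac12\arcsin\sqrt{x}$; differentiating twice in $y$ via $dx/dy=-x(1-x)z^2$ and inserting \eqref{special-case-x} gives $S_1=\frac{\Gamma^4}{16\pi^4}-\frac{\Gamma^4}{32\pi^3}$, writing $\Gamma=\Gamma(1/4)$.

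For $S_2$ and $S_3$ the same manipulations as in Remark~\ref{re-hy-cosh} apply at $m=0$: using the $\mathrm{sech}^3,\mathrm{sech}^4,\mathrm{sech}^5$ identities and one or two differentiations in $y$, both $S_2$ and $S_3$ are expressed through $U_0$, its derivatives, and the new series $U_2(y):=\sum_{n\ge0}\frac{(-1)^n}{(2n+1)^3\cosh((2n+1)y/2)}$ together with its derivatives up to order four at $y=\pi$ (concretely $W_0(y):=\sum_{n\ge0}\frac{(-1)^n}{(2n+1)\cosh^3((2n+1)y/2)}=\tfrac12 U_0-2\tfrac{d^2}{dy^2}U_2$, and $S_2,S_3$ are built from $W_0$ and the $S_1$-sum). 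Integrating the Fourier series \eqref{sd} of $\mathrm{sd}$ three times from $0$ to $K$ --- the choice $K=\pi z/2$ making $\cos((2n+1)\pi/2)=0$ and $\sin((2n+1)\pi/2)=(-1)^n$ do the bookkeeping --- and using the half-period formula $\mathrm{sd}(K-v)=\tfrac1{k'}\cn(v)$ produces the closed form
\[
U_2(y)=\frac{\pi^2}{16}\arcsin\sqrt{x}\;-\;\frac{\sqrt{x}}{4z^2}\int_0^K v^2\,\cn(v,k)\,dv .
\]
Thus $S_2,S_3$ --- and hence the conjecture --- are determined once $\int_0^K v^2\,\cn(v,k)\,dv$ and its first four $x$-derivatives are known at $x=\tfrac12$; the target values, consistent with $S_1$ above, are $S_2=-\frac{\Gamma^4}{96\pi^3}$ and $S_3=\frac{5\Gamma^4}{96\pi^4}-\frac{3\Gamma^4}{128\pi^3}+\frac{\Gamma^{12}}{6144\pi^9}$, which plug back into the displayed identity to give the conjectured value.

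The hard part is precisely that last evaluation, and it lies outside the machinery of this paper. The first antiderivative of $\mathrm{sd}$ is still an elliptic object --- it is $\tfrac1{kk'}\bigl(\arctan(k/k')-\arctan(k\,\cn(u)/k')\bigr)$, with Fourier series $2\sum_{n\ge0}\frac{(-1)^n\cos((2n+1)u/z)}{(2n+1)\cosh((2n+1)y/2)}$, and the same computation yields $\arcsin(k\,\sn(u))=2\sum_{n\ge0}\frac{\sin((2n+1)u/z)}{(2n+1)\cosh((2n+1)y/2)}$ for free --- but the second and third antiderivatives of $\mathrm{sd}$ are no longer elliptic functions, so neither Theorem~\ref{cosh} nor Remark~\ref{re-hy-cosh} supplies the Taylor data of $\int_0^K v^2\,\cn(v,k)\,dv$ at $x=\tfrac12$ (equivalently, the value of the lemniscatic $q$-series $\sum_{n\ge0}\frac{(-1)^n q^{\,n+1/2}}{(2n+1)^3(1+q^{2n+1})}$ at $q=e^{-\pi}$). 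A complete proof therefore hinges on an independent evaluation of that integral --- for instance via theta-function identities at the first singular modulus $k^2=\tfrac12$, where $K'=K$, $q=e^{-\pi}$ and $K=\Gamma^2(1/4)/(4\sqrt\pi)$, or via the Chowla--Selberg formula --- after which the remaining steps (differentiating the closed forms for $U_0$ and $U_2$ in $y$, inserting \eqref{special-case-x}, and collecting $\Gamma$-powers) are entirely routine.
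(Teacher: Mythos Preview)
The statement is labeled a \emph{conjecture} in the paper; the authors do not prove it, noting only that their method would require the series $\sum_{n\ge1}(-1)^n/\big((2n-1)\cosh^3(\tilde{n} y)\big)$ and that ``new ideas'' seem to be needed. So there is no proof in the paper to compare against, and your proposal is, as you yourself acknowledge, not a complete proof either.

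That said, your reduction goes further than the paper's remark, and the part you do carry out is correct. Integrating the Fourier expansion \eqref{sd} of $\mathrm{sd}$ over $[0,K]$ (substituting $w=\cn u$, so that $\mathrm{sd}(u)\,du=-dw/(k'^2+k^2w^2)$) gives $\int_0^K\mathrm{sd}(u)\,du=\frac{1}{kk'}\arctan(k/k')=\frac{\arcsin\sqrt x}{\sqrt{x(1-x)}}$, hence $U_0(y)=\tfrac12\arcsin\sqrt x$ exactly; two $y$-differentiations via $dx/dy=-x(1-x)z^2$ and \eqref{special-case-x} then yield your value $S_1=\Gamma^4/(16\pi^4)-\Gamma^4/(32\pi^3)$, which checks numerically. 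This already sharpens the paper's discussion, since one of the obstructing series is now in closed form.

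The genuine gap is exactly where you place it. Both $S_2$ and $S_3$ depend on derivatives of the series the paper flags, $W_0=\sum(-1)^n/\big((2n+1)\cosh^3((2n+1)y/2)\big)$, and through $\mathrm{sech}^3=\tfrac12(\mathrm{sech}-(\mathrm{sech})'')$ this drags in $U_2(y)=\sum(-1)^n/\big((2n+1)^3\cosh((2n+1)y/2)\big)$; your iterated-integral formula only trades $U_2$ for $\int_0^K v^2\cn(v,k)\,dv$, a non-elliptic object on which neither Theorem~\ref{cosh} nor Remark~\ref{re-hy-cosh} gives any purchase at $x=1/2$. The values you quote for $S_2$ and $S_3$ are merely targets consistent with the conjectured integral, not independently derived. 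So the proposal cleanly isolates the missing input --- a closed evaluation of $U_2(\pi)$ and its first few $y$-derivatives, equivalently of $\sum_{n\ge0}(-1)^nq^{\,n+1/2}/\big((2n+1)^3(1+q^{2n+1})\big)$ at $q=e^{-\pi}$ --- but does not supply it; the conjecture remains open.
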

To prove this conjecture using our method in this paper, we are required to study the series
\begin{equation*}
\sum_{n=1}^{\infty}\frac{(-1)^{n}}{(2n-1)\cosh^3(\tilde{n}y)}
\end{equation*}
by Cor.~\ref{corcos+cos}. It seems that we need new ideas to evaluate this precisely.

\begin{thm}\label{thm-exp-two-BT}
Set $\Gamma=\Gamma(1/4)$ and $R_{n}^{(k)}=R_{n}^{(k)}(1/2)$ for all $n$ and $k$. Then, for any integer $m>1$ we have
\begin{align}
&\int_{0}^{\infty}\frac{x^{4m-1}\, dx}{(\cos x-\cosh x)^3}=\nonumber\\
&\frac{(-1)^m\Gamma^{8m}}{2^{6m+7}\pi^{2m+2}}\begin{Bmatrix}
	2\pi\left[(4m-4)!(4m-1)R'_{4m-4}-\frac{8\pi^2 (4m-1)!R_{4m-6}}{(4m-5)\Gamma^8}\right]\\
+(4m-6)!\Big[(2m-1)\Big(8(m-1)(4m-5)(4m-3)\pi^2R_{4m-2} \\
+(4m-1)\big(4(m-3)R_{4m-6}+R''_{4m-6}\big)\Big)\\
-\frac{\Gamma^8}{256\pi^4}\Big(8(55+m(6m-37))R_{4m-6}+24(m-4)R''_{4m-6}+R^{(4)}_{4m-6}\Big)\Big]
\end{Bmatrix}.
\end{align}
\end{thm}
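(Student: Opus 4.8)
The plan is to combine Corollary~\ref{corcos-cosh} with the four explicit evaluations furnished by Theorem~\ref{sinh}, exactly mirroring the proof of Theorem~\ref{thm-exp-one-BT}. First I would specialize Corollary~\ref{corcos-cosh} to $p=m$, so that $\frac{(-1)^{m-1}}{\pi^{4m-2}2^{2m-3}}\int_{0}^{\infty}\frac{x^{4m-1}\, dx}{(\cos x-\cosh x)^3}$ is written as a fixed $\Q$-linear combination (with coefficients polynomial in $m$ and in $\pi$) of the four hyperbolic sums $\sum_{n\ge1}\frac{(-1)^nn^{4m-3}}{\sinh^3(n\pi)}$, $\sum_{n\ge1}\frac{(-1)^n\cosh(n\pi)n^{4m-2}}{\sinh^4(n\pi)}$, $\sum_{n\ge1}\frac{(-1)^nn^{4m-1}}{\sinh^3(n\pi)}$ and $\sum_{n\ge1}\frac{(-1)^nn^{4m-1}}{\sinh^5(n\pi)}$. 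These are precisely the four sums evaluated in Theorem~\ref{sinh}, and the constraint $m>1$ there (which in turn rests on $p\ge2$ in Corollary~\ref{corcos-cosh}) accounts for the hypothesis of the present theorem.

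Next I would substitute the four closed forms from Theorem~\ref{sinh}. Each is a $\Q$-linear combination of the quantities $R_{4m-6}$, $R'_{4m-4}$, $R''_{4m-6}$, $R^{(4)}_{4m-6}$ and $R_{4m-2}$, with coefficients that are monomials in $\Gamma^{8}$ and $\pi$; the vanishing relations $R_{4m-4}(1/2)=R'_{4m-2}(1/2)=R'_{4m-6}(1/2)=R''_{4m-4}(1/2)=0$ coming from Proposition~\ref{pro-coeffic-qs} have already been absorbed into these forms. Multiplying through by $(-1)^{m-1}\pi^{4m-2}2^{2m-3}$ and collecting like powers of $\Gamma$ and $\pi$, the many partial contributions regroup into the terms displayed on the right-hand side of the theorem; the overall factor $\Gamma^{8m}/(2^{6m+7}\pi^{2m+2})$ together with the separate $\Gamma^{8m-8}/\pi^{6m-2}$-type piece inside the braces simply reflects the two distinct $\Gamma$-weights already present in the $\sinh^{5}$-evaluation of Theorem~\ref{sinh}.

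The only genuine work is the algebraic bookkeeping: expanding the four substitutions, tracking the exponents of $2$, $\pi$ and $\Gamma$ in each term, and verifying that everything consolidates into the stated compact form. This is routine but lengthy symbolic manipulation, best carried out with \emph{Mathematica} as in Theorem~\ref{thm-exp-one-BT}; I expect the main obstacle to be purely computational---avoiding arithmetic slips among the numerous $2$-power and $\pi$-power exponents---with no conceptual difficulty beyond that.
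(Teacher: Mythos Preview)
Your proposal is correct and follows exactly the route taken in the paper: the paper's own proof reads simply ``Using Corollary~\ref{corcos-cosh} and Theorem~\ref{sinh}, we may deduce the desired evaluation after a rather tedious computation,'' which is precisely the substitution-and-collection argument you outline. Your additional remark explaining why the hypothesis $m>1$ is inherited from Theorem~\ref{sinh} (and ultimately from $p\ge 2$ in Corollary~\ref{corcos-cosh}) is a nice clarification that the paper leaves implicit.
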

\begin{proof}
Using Corollary \ref{corcos-cosh} and Theorem \ref{sinh}, we may deduce the desired evaluation after a rather tedious
computation.
\end{proof}

\begin{exa}
Set $\Gamma=\Gamma(1/4)$. When $m=2,3$, we have
\begin{align*}
\int_{0}^{\infty}&\frac{x^7\, dx}{(\cos x-\cosh x)^3}
=-\frac{105\Gamma^8}{2^{11}\pi^2}-\frac{21\Gamma^{16}}{2^{16}\pi^6}+\frac{7\Gamma^{16}}{2^{14}\pi^5}-\frac{9\Gamma^{16}}{2^{17}\pi^4}
-\frac{5\Gamma^{24}}{2^{23}\pi^{10}},\\
\int_{0}^{\infty}&\frac{x^{11}\, dx}{(\cos x-\cosh x)^3}=-\frac{4455\Gamma^{16}}{2^{15}\pi^4}-\frac{935\Gamma^{24}}{2^{20}\pi^8}+\frac{297\Gamma^{24}}{2^{18}\pi^7}
-\frac{189\Gamma^{24}}{2^{20}\pi^6}-\frac{195\Gamma^{32}}{2^{27}\pi^{12}}.
\end{align*}
\end{exa}

Hence, from Theorems \ref{thm-exp-one-BT} and \ref{thm-exp-two-BT} we obtain Theorem \ref{Main-one-theorem}.

\medskip
{\bf Acknowledgments.} Ce Xu is supported by the National Natural Science Foundation of China (Grant No. 12101008), the Natural Science Foundation of Anhui Province (Grant No. 2108085QA01) and the University Natural Science Research Project of Anhui Province (Grant No. KJ2020A0057). Jianqiang Zhao is supported by the Jacobs Prize from The Bishop's School.

\end{document}